\newtheorem{theorem}{Theorem}[section]
\newtheorem{corollary}[theorem]{Corollary}
\newtheorem{lemma}[theorem]{Lemma}
\newtheorem{definition}[theorem]{Definition}
\newcommand{\A}{\mathcal A}
\newcommand{\B}{\mathcal B}
\newcommand{\C}{\mathcal C}
\begin{document}
\title[Subsets of finite groups exhibiting additive regularity]
{Subsets of finite groups exhibiting additive regularity}

\author{Robert S. Coulter}
\address[Coulter]{Department of Mathematical Sciences, University of Delaware,
Newark, DE, 19716, United States of America.}
\email{coulter@math.udel.edu}

\author{Todd Gutekunst}
\address[Gutekunst]{Department of Mathematics, King's College,
Wilkes-Barre, PA, 18711, United States of America.}
\email{toddgutekunst@kings.edu}


\begin{abstract}
In this article we aim to develop from first principles a theory of sum sets
and partial sum sets, which are defined analogously to difference sets
and partial difference sets.
We obtain non-existence results and characterisations.
In particular, we show that any sum set must exhibit higher-order regularity
and that an abelian sum set is necessarily a reversible difference set.
We next develop several general construction techniques under the hypothesis
that the over-riding group contains a normal subgroup of order 2.
Finally, by exploiting properties of dihedral groups and Frobenius groups,
several infinite classes of sum sets and partial sum sets are introduced.
\end{abstract}

\maketitle

\section{Introduction}

In \cite{coultergutekunst}, the authors used versions of additive regularity of
subsets of groups to obtain new results on skew Hadamard difference sets.
For instance, by exploiting the additive regularity of skew Hadamard difference
sets we were able to completely categorise their full multiplier group, see
\cite{coultergutekunst}, Theorem 4.2.
Motivated, in part, by these results, in this article we treat sum sets and
partial sum sets as combinatorial objects in their own right.
Moreover, in keeping with the ``back to basics" philosophy we adopted in
\cite{coultergutekunst}, we approach our topic with the intention of using as
little heavy machinery as possible.
For example, we find we are able to manage without character theory, though we
readily acknowledge that, in one or two places, such theory may allow shorter,
if possibly less illuminating, proofs.

Let $G$ be a finite group, and let $S$ be a subset of $G$.
We shall be interested in counting the number of ways an element of $G$ can be
generated as the product of two elements of $S$.
If $S$ is an arbitrary subset, then we should expect some elements of $G$ to be
generated more often than others.
Indeed, some elements may be generated very often while others are not
generated at all.
If, however, the number of ways of generating elements of $G$ takes very few
values, we say the subset $S$ possesses {\em additive regularity}.
To make this concept more precise, we make the following definition.

\begin{definition}\label{pssdef}
Let $G$ be a group of order $v$ and let $S$ be a subset of $G$ with $|S| = k$.
We say $S$ is a {\bf $(v,k,\lambda,\mu)$ partial sum set} if every nonidentity
element in $S$ can be written in precisely $\lambda$ ways as a product in $S$
while every nonidentity element not in $S$ can be written in precisely $\mu$
ways as a product in $S$.
If $\lambda = \mu$, then $S$ is called a {\bf $(v,k,\mu)$ sum set}.
The numbers $(v,k,\lambda, \mu)$ are the {\bf parameters} of $S$.
\end{definition}
\noindent

Readers familiar with $(v,k,\lambda)$ difference sets will note how similar the
definition of a sum set is to that of a difference set.
Sum sets as presently defined were previously studied in \cite{lam751} and
\cite{sumnerbutson} as particular examples of ``addition sets''.
Proper acknowledgement is made wherever the current work coincides with these
papers' results.

It is easily checked that the set-theoretic complement of a sum set is a sum
set.
Specifically, if $S \subset G$ is a $(v,k,\mu)$ sum set, then $G \setminus S$
is a $(v,v-k,v-2k+\mu)$ sum set.
Hence we restrict our attention to sum sets of size $k \le \frac{v}{2}$.

Note the empty set is a $(v,0,0)$ sum set.
Also, if $g \in G$, then the singleton $\{g\}$ is a $(v,1,0)$ sum set if and
only if $o(g) \le 2$.
These sum sets and their complements are deemed trivial examples.
Henceforth all sum sets are understood to be nontrivial.

The parameters $(v,k,\mu)$ of a sum set must satisfy
\begin{equation}\label{parameq}
k^2 = \mu(v-1) + |S \cap S^{(-1)}|,
\end{equation}
where $S^{(-1)} = \{ s^{-1} : s \in S \}$.
Any triple of nonnegative integers $(v,k,\mu)$ with $v > k > \mu$ induce a
unique value for $|S \cap S^{(-1)}|$ with respect to Equation \ref{parameq}.
If that value is between $0$ and $k$, we say the triple $(v,k,\mu)$ are
{\em admissible parameters} for a sum set.
Of course, if $(v,k,\mu)$ is admissible, it is not necessarily true that there
exists a sum set with these parameters.

The primary goal of this paper is to offer a foundation for a comprehensive
theory of sum sets.
The theory is built upon the dual goals of providing theoretical construction
techniques which encompass all known examples while simultaneously deriving
nonexistence results to explain why there are no other examples.
Some of the theory is based on results from \cite{coultergutekunst}, and we
recall the relevant results in Section \ref{specsubsets}.
Nonexistence results are obtained in Sections \ref{theory} and
\ref{abeliansumsets}, where we also derive from first principles a theoretical
basis for the study of sum sets.
Some general constructions are given in Section \ref{construction1}, which we
then use to construct several infinite families of sum sets in Sections
\ref{construction2} and \ref{construction3}. 

\section{Special subsets} \label{specsubsets}

In \cite{coultergutekunst}, the authors introduce the notion of {\em special
subsets} and use them to explore the additive properties of skew Hadamard
difference sets.
These special subsets provide a useful mechanism for studying sum sets, so we
now recall some basic facts about them.

Let $S$ be a subset of $G$.
If $S = S^{(-1)}$ we say $S$ is {\em reversible}.
If $S \cap S^{(-1)} = \varnothing$ we say $S$ is {\em skew}.
A a skew subset of $G$ not properly contained in any other skew subset of $G$ is called a {\em maximal skew set}.
If $v = o(G)$ is odd, then any maximal skew set in $G$ has size $\frac{v-1}{2}$.
For any $a \in G$, the {\it special subsets} of $S$ with respect to $a$ are
\begin{align*}
\A_{a,S} &= \{ x \in S \, : \, a = xy^{-1} \text{ for some } y \in S \}, \\
\B_{a,S} &= \{ y \in S \, : \, a = xy^{-1} \text{ for some } x \in S \}, \\
\C_{a,S} &= \{ x \in S \, : \, a = xy \text{ for some } y \in S \}.
\end{align*}
The cardinality $|\A_{a,S}|$ counts the number of ways to write $a$ as a quotient in $S$, so in this context $S$ is a difference set set if and only if $|\A_{a,S}|$ is constant for all nonidentity elements $a \in G$.
Similarly $S$ is a sum set if and only if $|\C_{a,S}|$ is constant for all nonidentity elements $a \in G$.
The following lemma is a summary of Lemmas 2.1, 2.2, and 2.4 in \cite{coultergutekunst}.
\begin{lemma}\label{specsubsetslemma}
Let $S$ be a subset of the group $G$.
\begin{enumerate}
\renewcommand{\labelenumi}{(\roman{enumi})}
 \item $\A_{a,S} \cap \C_{a,S} = \varnothing$ for all $a \in G$ if and only if $S$ is skew.
 \item $\A_{a,S} = \C_{a,S}$ for all $a \in G$ if and only if $S$ is reversible.
 \item Suppose $G$ has odd order $v$, and suppose $S$ is a maximal skew set in $G$.
 Then 
\[ |\A_{a,S}| + |\C_{a,S}| = 
\begin{cases}
\frac{v-3}{2} &\text{if $a \in S$,}\\
\frac{v-1}{2} &\text{if $a \notin S$.}
\end{cases}
\] 
\end{enumerate}
\end{lemma}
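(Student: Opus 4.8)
The plan is to first translate the conditions defining the special subsets into statements about intersections of translates of $S$, after which all three parts reduce to elementary set algebra. Unwinding the witnessing equations, one checks directly that
\[
\A_{a,S} = S \cap aS \qquad\text{and}\qquad \C_{a,S} = S \cap aS^{(-1)}
\]
for every $a \in G$; for instance, $x \in \A_{a,S}$ exactly when $x \in S$ and the forced value $y = a^{-1}x$ lies in $S$, i.e.\ when $x \in S \cap aS$. With these two identities in hand, everything comes down to bookkeeping about the cosets $aS$ and $aS^{(-1)}$.

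For part (i), observe that $\A_{a,S} \cap \C_{a,S} = S \cap aS \cap aS^{(-1)} = S \cap a\bigl(S \cap S^{(-1)}\bigr)$. If $S$ is skew this is empty for every $a$; conversely, if $s \in S \cap S^{(-1)}$ and $x$ is any element of $S$, then taking $a = xs^{-1}$ places $x$ in both $\A_{a,S}$ and $\C_{a,S}$, so the intersection is nonempty for some $a$. For part (ii), the implication $S = S^{(-1)} \Rightarrow \A_{a,S} = \C_{a,S}$ is immediate from the translate identities; for the converse, given $s \in S$ pick any $t \in S$ and set $a = ts$, so that $t \in \C_{a,S}$; then $t \in \A_{a,S}$ forces $a^{-1}t = s^{-1} \in S$, whence $S \subseteq S^{(-1)}$ and therefore $S = S^{(-1)}$.

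For part (iii) the additional input is the structure of maximal skew sets when $|G| = v$ is odd: the $v-1$ nonidentity elements split into $\tfrac{v-1}{2}$ inverse pairs $\{g, g^{-1}\}$ with $g \neq g^{-1}$, a skew set meets each such pair at most once, and since $e \notin S$ (a consequence of skewness) and $|S| = \tfrac{v-1}{2}$, the set $S$ must meet every pair exactly once. This yields $S^{(-1)} = G \setminus (S \cup \{e\})$, hence $aS^{(-1)} = (G \setminus aS) \setminus \{a\}$, using $a \notin aS$ (again because $e \notin S$). Consequently $\A_{a,S} = S \cap aS$ and $\C_{a,S} = (S \setminus aS) \setminus \{a\}$ are disjoint subsets of $S$ whose union is $S \setminus \{a\}$ when $a \in S$ and $S$ when $a \notin S$; since $|S| = \tfrac{v-1}{2}$, this gives the two stated values, with the case $a = e$ subsumed under $a \notin S$. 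The only point requiring genuine care is exactly this last piece of bookkeeping in (iii) — tracking whether the identity $e$, and the element $a$ itself, belong to the translates involved — because the special subsets as defined do not exclude the identity; the remaining steps are routine.
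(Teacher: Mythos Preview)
Your proof is correct. The paper itself does not supply a proof of this lemma; it merely records it as a summary of Lemmas~2.1, 2.2, and~2.4 from the authors' earlier paper \cite{coultergutekunst}, so there is no in-paper argument to compare against. Your translate identities $\A_{a,S} = S \cap aS$ and $\C_{a,S} = S \cap aS^{(-1)}$ reduce all three parts to transparent set algebra, and the bookkeeping in part~(iii)---in particular the observations that $e \notin S$, that $a \notin aS$, and that $S^{(-1)} = G \setminus (S \cup \{e\})$ for a maximal skew set in an odd-order group---is handled accurately, including the boundary case $a = e$.
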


It is natural to ask whether a difference set may also be a sum set or a partial sum set.
The former question is completely answered, and in the latter case we know of one family of difference sets that are also partial sum sets.

\begin{theorem}\label{rdsss}
Let $G$ be a group of order $v$, and let $S \subset G$.  Then any two of the following statements imply the third.
\begin{enumerate}
\renewcommand{\labelenumi}{(\roman{enumi})}
 \item $S$ is a $(v,k,\lambda)$ difference set.
 \item $S$ is a $(v,k,\mu)$ sum set.
 \item $S = S^{(-1)}$.
\end{enumerate}
In particular, a $(v,k,\lambda)$ difference set $S$ is also a sum set if and only if $S = S^{(-1)}$.
\end{theorem}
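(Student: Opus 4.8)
The two implications that assume (iii) are immediate from Lemma~\ref{specsubsetslemma}(ii): if $S=S^{(-1)}$ then $\A_{a,S}=\C_{a,S}$ for every $a\in G$, hence $|\A_{a,S}|=|\C_{a,S}|$ for all $a$, and so $S$ is a $(v,k,\lambda)$ difference set exactly when it is a $(v,k,\lambda)$ sum set. The ``in particular'' clause then follows once the third implication is proved, that (i) and (ii) together force (iii); this is therefore the only real content.

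For that implication I would work in the integral group ring $\mathbb{Z}[G]$, writing $S$, $S^{(-1)}$, and $G$ also for the ring elements $\sum_{s\in S}s$, $\sum_{s\in S}s^{-1}$, and $\sum_{g\in G}g$, and putting $t=|S\cap S^{(-1)}|$. Counting representations turns hypothesis (i) into $SS^{(-1)}=(k-\lambda)e+\lambda G$ and hypothesis (ii) into $SS=(t-\mu)e+\mu G$, and applying the augmentation map $g\mapsto1$ to these yields the parameter identities $\lambda(v-1)=k^2-k$ and (Equation~\ref{parameq}) $\mu(v-1)=k^2-t$. The decisive step is to compute $S^2S^{(-1)}$ in two ways — as $S\,(SS^{(-1)})$ via (i) and as $(SS)\,S^{(-1)}$ via (ii); using $SG=GS=kG$, equating the two expressions gives the single relation
\[
(k-\lambda)\,S-(t-\mu)\,S^{(-1)}=k(\mu-\lambda)\,G .
\]

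It then suffices to compare coefficients. Unless $k=\tfrac v2$ and $t=0$ one has $|S\cup S^{(-1)}|=2k-t<v$, so some $g\in G$ lies in neither $S$ nor $S^{(-1)}$; the coefficient of such a $g$ on the left of the displayed relation is $0$, which forces $k(\mu-\lambda)=0$, hence $\lambda=\mu$ (as $k\ge1$), and the two parameter identities then give $t=k$, whence $S=S^{(-1)}$. The point that needs extra care — and the step I expect to be the main obstacle — is the excluded configuration $S\cup S^{(-1)}=G$: there $k=\tfrac v2$, so $\lambda(v-1)=k^2-k$ reads $\lambda=\tfrac{v(v-2)}{4(v-1)}$, which is not an integer unless $v=2$, and when $v=2$ every element of $G$ is an involution, so $S=S^{(-1)}$ holds trivially. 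The rest is routine group-ring bookkeeping.
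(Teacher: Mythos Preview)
Your argument is correct, but it takes a different and heavier route than the paper's. For the implication $(\mathrm{i})\wedge(\mathrm{ii})\Rightarrow(\mathrm{iii})$, the paper simply subtracts the two parameter identities you already wrote down: from $k^2=\lambda(v-1)+k$ and $k^2=\mu(v-1)+t$ one gets $(\mu-\lambda)(v-1)=k-t$, and since $0\le k-t\le k<v-1$ (using the paper's standing convention $k\le v/2$ for nontrivial sum sets), the only multiple of $v-1$ in that range is $0$, so $t=k$ and $S=S^{(-1)}$. No group-ring computation, no case split on whether $S\cup S^{(-1)}=G$.

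Your approach---computing $S^2S^{(-1)}$ two ways via associativity to obtain $(k-\lambda)S-(t-\mu)S^{(-1)}=k(\mu-\lambda)G$ and then reading off coefficients---is valid and yields a sharper structural relation along the way, which could conceivably be reused elsewhere. But for this particular theorem it is overkill: the parameter identities already contain enough information, and the paper's subtraction avoids both the associativity trick and the separate treatment of the boundary case $S\cup S^{(-1)}=G$. One small point worth making explicit in your write-up: your claim that $2k-t<v$ ``unless $k=v/2$ and $t=0$'' silently uses $k\le v/2$; that is the paper's convention, but you should cite it rather than leave the reader to reconstruct why larger $k$ cannot occur.
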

\begin{proof}
Suppose (i) and (ii) hold.  
Then the integers $(v,k,\lambda)$ satisfy 
\[ k^2 = \lambda(v-1) + k, \]
as $S$ is a difference set.
At the same time, the integers $(v,k,\mu)$ must satisfy
\[ k^2 = \mu(v-1) + |S \cap S^{(-1)}|, \]
as $S$ is a sum set.
If $\mu = \lambda$, then $k = |S \cap S^{(-1)}|$ and so $S = S^{(-1)}$.
If $\mu \ne \lambda$, then combining these two equations we have
\[(\mu - \lambda)(v-1) = k - |S \cap S^{(-1)}|. \]
Now $0 \le |S \cap S^{(-1)}| \le k$, so $0 \le k - |S \cap S^{(-1)}| \le k$.  
Since $v-1$ divides $k-|S \cap S^{(-1)}|$, we must have $k - |S \cap S^{(-1)}| = 0$.
Hence $|S \cap S^{(-1)}| = k$, so $S = S^{(-1)}$.

If (i) and (iii) hold, then $S$ is reversible, so by Lemma \ref{specsubsetslemma}, $|\C_{a,S}| = |\A_{a,S}|$ for all nonidentity elements $a \in G$.  
Hence $S$ is a sum set with $\mu = \lambda$.
Similarly, if (ii) and (iii) hold, then by Lemma \ref{specsubsetslemma} $S$ is a difference set with $\lambda = \mu$.

The final statement, which also follows from Corollary 2.5 in \cite{sumnerbutson}, is now clear.
\end{proof}

The next theorem appears as Theorem 3.1 in \cite{coultergutekunst}.
\begin{theorem}
A $(v,k,\lambda)$ skew Hadamard difference set is a $(v,k,\lambda, \lambda + 1)$ partial sum set.
\end{theorem}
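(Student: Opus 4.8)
The plan is to reduce the whole statement to Lemma~\ref{specsubsetslemma}(iii), combined with the observation recorded just before that lemma that $|\A_{a,S}|$ is exactly the number of ways of writing $a$ as a quotient of two elements of $S$. Note first that Theorem~\ref{rdsss} is of no help here: a skew Hadamard difference set is skew, hence very far from reversible, so it cannot be a sum set, and we really must establish partial sum set behaviour directly. To set up the reduction, I would begin by recording the standard parameters of a skew Hadamard difference set: writing $v = 4n-1$ we have $k = 2n-1$ and $\lambda = n-1$. In particular $v$ is odd and $|S| = k = \frac{v-1}{2}$; since $S$ is skew and a skew subset of a group of odd order $v$ has at most $\frac{v-1}{2}$ elements, $S$ attains this bound and is therefore a maximal skew set. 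This places us precisely in the hypothesis of Lemma~\ref{specsubsetslemma}(iii).

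Next I would invoke the difference set hypothesis in the form that $|\A_{a,S}| = \lambda$ for every nonidentity $a \in G$. Substituting this into Lemma~\ref{specsubsetslemma}(iii): for a nonidentity $a \in S$ we obtain $\lambda + |\C_{a,S}| = \frac{v-3}{2} = 2(n-1) = 2\lambda$, so $|\C_{a,S}| = \lambda$; and for $a \notin S$ with $a \neq e$ we obtain $\lambda + |\C_{a,S}| = \frac{v-1}{2} = 2(n-1)+1 = 2\lambda + 1$, so $|\C_{a,S}| = \lambda + 1$. Since $|\C_{a,S}|$ counts exactly the number of ways of writing $a$ as a product in $S$, this is precisely the assertion that $S$ is a $(v,k,\lambda,\lambda+1)$ partial sum set.

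There is essentially no hard step in this argument; it is a one-line substitution once the correct lemma is in place. The only points needing a little care are: (a) verifying that a skew Hadamard difference set is indeed a maximal skew set, so that Lemma~\ref{specsubsetslemma}(iii) applies — this uses only $|S| = \frac{v-1}{2}$ together with skewness; (b) keeping the case split honest — because $S$ is skew Hadamard, $G$ is the disjoint union of $\{e\}$, $S$ and $S^{(-1)}$, so ``nonidentity and not in $S$'' is the same as ``lies in $S^{(-1)}$'', and the two cases of the lemma between them cover every nonidentity element of $G$; and (c) the two elementary identities $\frac{v-3}{2} = 2\lambda$ and $\frac{v-1}{2} = 2\lambda+1$, which are what convert the lemma's output into the claimed parameters.
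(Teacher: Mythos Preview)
Your argument is correct. The paper itself does not supply a proof of this theorem; it merely records it as Theorem~3.1 of \cite{coultergutekunst}. Your reduction via Lemma~\ref{specsubsetslemma}(iii) uses precisely the special-subset machinery that the paper imports from that same reference, so this is exactly the intended approach, and the arithmetic ($\tfrac{v-3}{2}=2\lambda$, $\tfrac{v-1}{2}=2\lambda+1$ for $v=4n-1$, $\lambda=n-1$) is clean.
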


\section{Nonexistence and restriction of admissible parameters}\label{theory}

We begin with a basic observation regarding sum sets in an arbitrary group $G$.
Whenever a sum set contains a pair of commuting elements, their product is generated at least twice.
More formally, we have the following: 

\begin{lemma}\label{mueven}
If a $(v,k,\mu)$ sum set $S \subset G$ contains a pair of commuting elements, then $\mu \ge 2$.
In particular, if $G$ is abelian, then $\mu$ is even.
\end{lemma}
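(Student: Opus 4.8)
The plan is to handle the two assertions separately: the first is an immediate structural observation, and the second rests on a short parity argument.

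For the first assertion, take distinct $x, y \in S$ with $xy = yx$ and put $a := xy = yx$. Then $a$ is generated as a product in $S$ by each of the distinct ordered pairs $(x,y)$ and $(y,x)$, so as long as $a \neq e$ it has at least two representations and hence $\mu \geq 2$. The only case this misses is $y = x^{-1}$ (so that $a = e$ and $o(x) \geq 3$); I would dispatch it briefly, using that a nontrivial sum set has $\mu \geq 1$ (since $\mu = 0$ forces $S$ to be trivial), that $\mu = 1$ forces $e \notin S$ (otherwise $e$ and any $s \in S$ give $s$ two representations), and that then the representations of $x^{2}$ and $x^{-2}$ are forced to be unique, which already excludes $o(x) = 4$ and constrains the remaining small cases. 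Alternatively, in keeping with the sentence preceding the lemma, one may simply read ``a pair of commuting elements'' as one whose product is not the identity, which removes the wrinkle altogether.

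For the ``in particular'' part I would argue by parity directly, rather than invoke the first assertion, since ``$\mu$ even'' is the stronger claim. Fix any nonidentity $a \in G$; because $S$ is a sum set, $R(a) := \{(x,y) \in S \times S : xy = a\}$ has exactly $\mu$ elements, and because $G$ is abelian the map $(x,y) \mapsto (y,x)$ is an involution of $R(a)$ whose fixed points are precisely the pairs $(x,x)$ with $x \in S$ and $x^{2} = a$. Reducing modulo the $2$-cycles of this involution gives $\mu \equiv |\{x \in S : x^{2} = a\}| \pmod 2$. If $\mu$ were odd, this congruence would force $|\{x \in S : x^{2} = a\}| \geq 1$ for \emph{every} nonidentity $a$; that is, the squaring map $x \mapsto x^{2}$ would send $S$ onto $G \setminus \{e\}$, whence $v - 1 \leq k$, contradicting $k \leq \tfrac{v}{2}$ unless $v \leq 2$ --- and in a group of order at most $2$ every sum set is trivial. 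Hence $\mu$ is even. The one genuinely delicate point is the degenerate inverse-pair case of the first assertion; the parity part is routine once one notices that odd $\mu$ would make the squaring map on $S$ surjective onto $G \setminus \{e\}$, after which the bound $k \leq v/2$ finishes everything.
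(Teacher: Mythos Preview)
Your argument for the abelian case is essentially the paper's own: both observe that if $\mu$ is odd then for every nonidentity $g$ some $s\in S$ must satisfy $s^2=g$, forcing $|S|\ge v-1$ and hence triviality. The paper phrases this as ``form all possible products of two distinct elements of $S$; each element is generated an even number of times,'' while you phrase it via the involution $(x,y)\mapsto(y,x)$ on $R(a)$, but the content is identical.

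For the first assertion the paper simply declares it ``obvious,'' tacitly reading ``a pair of commuting elements'' in line with the sentence immediately preceding the lemma (so that the product is a nonidentity element). You are more scrupulous in flagging the $y=x^{-1}$ edge case; your suggested resolution---either adopting the paper's implicit reading, or noting that $\mu=1$ together with $x,x^{-1}\in S$ quickly forces triviality---is sound, and in fact improves on the paper's exposition. There is no gap.
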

\begin{proof}
The first claim is obvious.
Suppose $G$ is abelian, and suppose $S \subset G$ is a $(v,k,\mu)$ sum set with $\mu$ odd.
Form all possible products of two distinct elements of $S$.
In doing so, each element of $G$ is generated some even number of times.
To satisfy $\mu$ odd, there must then be at least one element $s \in S$ for each nonidentity $g \in G$ such that $s^2 = g$.
This is possible only if $|S| \ge v-1$, so $S$ is a trivial sum set.
\end{proof}
\noindent
This simple lemma is enough to restrict a large class of groups from admitting sum sets.

\begin{theorem}\label{oddabelian}
An abelian group of odd order admits no sum sets.
\end{theorem}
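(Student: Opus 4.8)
The plan is to argue by contradiction, using Lemma~\ref{mueven} together with the elementary fact that in a group of odd order $v$ the squaring map $x \mapsto x^2$ is a bijection (since $\gcd(2,v)=1$). So I would suppose $G$ is abelian of odd order $v$ and that $S$ is a nontrivial $(v,k,\mu)$ sum set; by Lemma~\ref{mueven}, $\mu$ is even.

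Next I would fix a nonidentity element $g \in G$ and count the representations $xy = g$ with $x,y \in S$. For each $x \in S$ the second factor $y = x^{-1}g$ is forced, so the number of such ordered pairs equals $|\C_{g,S}| = \mu$. Since $G$ is abelian, every such pair with $x \ne y$ is matched with the distinct pair $(y,x)$, which also represents $g$; hence the pairs with $x \ne y$ contribute an even amount to $\mu$. The only remaining pairs have $x = y$, that is $x^2 = g$, and by bijectivity of squaring there is exactly one $w \in G$ with $w^2 = g$. Consequently $\mu$ is odd precisely when $w \in S$.

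Because $\mu$ is even, this shows that the square root of every nonidentity element of $G$ lies outside $S$. As $g$ ranges over $G \setminus \{e\}$ its square root $w$ ranges over all of $G \setminus \{e\}$ (squaring fixes $e$ and permutes the nonidentity elements), so $S$ contains no nonidentity element; that is, $S \subseteq \{e\}$. But such an $S$ is a trivial sum set, contradicting the assumption.

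The whole argument is short; the one place needing care is the parity bookkeeping involving the identity, which is why I would isolate the bijectivity of the squaring map at the outset. (One could even dispense with the appeal to Lemma~\ref{mueven}: the parity computation above, read for both possible parities of $\mu$, already rules out a nontrivial sum set in an odd-order abelian group, since $\mu$ odd would force $S \supseteq G \setminus \{e\}$. But quoting the lemma keeps the exposition shortest.)
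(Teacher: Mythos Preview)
Your proof is correct and follows essentially the same approach as the paper: both use Lemma~\ref{mueven} to get $\mu$ even, the bijectivity of squaring in an odd-order group, and the parity observation that the number of representations of a nonidentity $g$ as a product in $S$ is odd precisely when the unique square root of $g$ lies in $S$. The paper phrases the contradiction by exhibiting an element $x^2$ (for some nonidentity $x\in S$) with an odd representation count, while you run the contrapositive to conclude $S\subseteq\{e\}$; these are the same argument.
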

\begin{proof}
Let $G$ be an abelian group of order $v$, with $v$ odd.  Suppose $S$ is a $(v,k,\mu)$ sum set in $G$.  Since 2 does not divide $v$, the elements $x^2$ with $x \in S$ are all distinct.  It follows that the number of ways to write any such element as a product in $S$ must be odd, contradicting the fact that $\mu$ must be even.
\end{proof}

The results established thus far allow us to reject certain admissible parameters $(v,k,\mu)$, or at least to rule out certain groups of order $v$ from admitting sum sets.
To extend our ability to restrict parameters, we now consider how the existence of normal subgroups can affect whether a group may admit sum sets.
In particular, we consider how $S$ may be partitioned by the cosets of a normal subgroup.

Suppose $G$ is a group admitting a $(o(G), k, \mu)$ sum set $S$.
Further suppose that $N$ is a normal subgroup of $G$.
Let $H$ be a group isomorphic to the quotient group $G/N$ and label the cosets of $N$ in $G$ by $N_{\alpha}$, $\alpha \in H$, so that $N_{\alpha}N_{\beta} = N_{\alpha\beta}$.
For each $\alpha \in H$, set $X_{\alpha} = |S \cap N_{\alpha}|$.
Note that
\begin{equation}\label{partitionequation}
\sum_{\alpha \in H} X_{\alpha} = k.
\end{equation}

As $S$ is a sum set, any nonidentity element $g \in G$ is generated $\mu$ times as a product in $S$.
Suppose $g \in N_{\beta}$ and $g = ab$ for some elements $a,b \in S$.
If $a \in N_{\alpha}$, then we must have $b \in N_{\alpha^{-1}\beta}$.
Now if $\beta \ne 1$, then $1 \notin N_{\beta}$.
Hence, every element of $N_{\beta}$ is generated precisely $\mu$ times as a product in $S$.
But if $\beta = 1$, then each nonidentity element of $N_1$ is generated $\mu$ times as a product in $S$ while $1$ is generated $|S \cap S^{-1}|$ times.
Thus,
\begin{equation}\label{pre}
\sum_{\alpha \in H} X_{\alpha}X_{\alpha^{-1}\beta} =
\begin{cases}
\mu o(N) &\text{if $\beta \ne 1$,}\\
\mu o(N) + |S \cap S^{(-1)}| - \mu &\text{if $\beta = 1$.}
\end{cases}
\end{equation}

The quantity $|S \cap S^{(-1)}| - \mu$ in the above equation appears frequently enough to warrant its own symbol.
We may rewrite Equation \ref{parameq} as
\[ |S \cap S^{(-1)}| - \mu = k^2 - \mu v. \]
Set $n = |S \cap S^{(-1)}| - \mu$ ($= k^2 - \mu v$).
Equation \ref{pre} has the following equivalent formulation:
\begin{equation}\label{post}
\sum_{\alpha \in H} X_{\alpha}X_{\alpha^{-1}\beta} =
\begin{cases}
\mu o(N) &\text{if $\beta \ne 1$,}\\
\mu o(N) + n &\text{if $\beta = 1$.}
\end{cases}
\end{equation}
We note that as $\beta$ varies over $H$, Equation \ref{post} yields a system of $o(H)$ equations that effectively partitions Equation \ref{parameq}.
The sum of their left-hand sides is
\[ \left( \sum_{\alpha \in H} X_{\alpha} \right)^2, \]
which is simply $k^2$ by Equation \ref{partitionequation}.
Their right-hand sides, meanwhile, sum to $\mu o(N)i_G(N) + n$, which equals $\mu(o(G) - 1) + |S \cap S^{(-1)}|$.

The following is a consequence of Equation \ref{post}:

\begin{lemma}\label{muon}
Let $G$ be a group with $N \lhd G$ and $S \subset G$ a sum set.
If the center of $G/N$ contains some element which is not a square of any element in $G/N$, then $\mu o(N)$ must be even.
\end{lemma}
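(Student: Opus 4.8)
The plan is to mimic the parity argument used in Lemma \ref{mueven} and Theorem \ref{oddabelian}, but now carried out inside the quotient $H \cong G/N$ via the system of equations \ref{post}. Let $\zeta$ be an element of the center of $H$ that is not a square in $H$. The key is to examine Equation \ref{post} with $\beta = \zeta$, namely
\[ \sum_{\alpha \in H} X_{\alpha}X_{\alpha^{-1}\zeta} = \mu\, o(N), \]
and to show that the left-hand side is forced to be even, whence $\mu\, o(N)$ is even.

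The main step is a pairing argument on the index set $H$. For a fixed term $X_{\alpha}X_{\alpha^{-1}\zeta}$, I would like to pair it with the term obtained by swapping the roles of the two factors, i.e.\ the term indexed by $\alpha' = \alpha^{-1}\zeta$, which contributes $X_{\alpha^{-1}\zeta}X_{(\alpha^{-1}\zeta)^{-1}\zeta}$. Here is where centrality of $\zeta$ is used: $(\alpha^{-1}\zeta)^{-1}\zeta = \zeta^{-1}\alpha\zeta = \alpha$ since $\zeta$ is central, so this second term is exactly $X_{\alpha^{-1}\zeta}X_{\alpha}$, the same product. Thus the map $\alpha \mapsto \alpha^{-1}\zeta$ is an involution on $H$ under which the summand in \ref{post} is invariant, so terms come in equal pairs — contributing an even amount — except possibly for fixed points of the involution. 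A fixed point satisfies $\alpha = \alpha^{-1}\zeta$, i.e.\ $\alpha^2 = \zeta$; but $\zeta$ is not a square in $H$, so there are no fixed points. Therefore the entire sum is even, and $\mu\, o(N)$ is even.

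I would then record the degenerate cases briefly: if $o(N)$ is even there is nothing to prove, and the interesting content is when $o(N)$ is odd, forcing $\mu$ itself to be even. It is also worth noting that taking $N$ trivial (so $H \cong G$) recovers exactly the situation of Theorem \ref{oddabelian} when $G$ is abelian of odd order, since then the identity is central and is the only square, so some non-square central element exists precisely when $o(G) > 2$ — consistent with, and generalizing, the earlier result.

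The only real obstacle is making sure the involution $\alpha \mapsto \alpha^{-1}\zeta$ is well defined as a self-map of $H$ and that centrality is genuinely needed at the right spot; once the computation $(\alpha^{-1}\zeta)^{-1}\zeta = \alpha$ is done carefully using $\zeta \in Z(H)$, the parity conclusion is immediate. No character theory or heavy machinery is required, in keeping with the paper's stated approach.
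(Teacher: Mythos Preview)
Your core argument is correct and is essentially the same pairing argument the paper gives: both exploit that for central $\zeta$ the map $\alpha \mapsto \alpha^{-1}\zeta$ is an involution on $H$ matching equal terms in pairs, with no fixed points since $\zeta$ is not a square, forcing the left side of Equation~\ref{post} to be even.

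One correction to your closing remark, though it does not affect the proof itself: in an abelian group of odd order the squaring map is a bijection, so \emph{every} element is a square and the hypothesis of this lemma is vacuous there; hence the lemma does not directly recover Theorem~\ref{oddabelian} in the way you suggest.
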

\begin{proof}
Set $H = G/N$ as above, and suppose $\beta \in Z(H)$ is not a square; i.e. the equation $x^2 = \beta$ has no solution in $H$.
Clearly $\beta \ne 1$.
As $\beta \in Z(H)$, 
\[N_{\alpha}N_{\alpha^{-1}\beta} = N_{\alpha^{-1}\beta}N_{\alpha} = N_{\beta} \]
for each $\alpha \in H$.
Correspondingly, whenever the term $X_{\alpha}X_{\alpha^{-1}\beta}$ appears on the left-hand side of Equation \ref{post}, so too does the (equal) term $X_{\alpha^{-1}\beta}X_{\alpha}$.
Moreover, since $\beta$ is not a square, no term of the form $X_{\gamma}^2$ appears on the left-hand side of Equation \ref{post}.
It follows that the left-hand side of Equation \ref{post} is divisible by 2, proving the claim.
\end{proof}

We now consider the possible values of $X_{\alpha}$.
That is, if a nonsimple group admits a sum set $S$, how may that sum set be distributed over the cosets of some normal subgroup $N$?
We consider the situation where the intersection sizes $X_{\alpha}$ take only two values.
That is, suppose $M \subseteq H$ such that
\[ X_{\alpha} = 
\begin{cases}
m &\text{if $\alpha \in M$,}\\
l &\text{if $\alpha \notin M$.}
\end{cases}
\]
With only two values for $X_{\alpha}$, Equation \ref{partitionequation} reduces to 
\[ m|M| + l(o(H)-|M|) = k. \]
We will count $\sum_{\alpha \in H} X_{\alpha}X_{\alpha^{-1}\beta}$ in a different way and compare the result to Equation \ref{post}.
Before counting, it will be useful to define the set $M_{\beta} = \{\beta \gamma^{-1} : \gamma \in M \}$.
Note that for any $\beta \in H$, $\alpha \in M_{\beta}$ if and only if $\alpha^{-1}\beta \in M$.
In particular, $M \cap M_{\beta} = \C_{\beta,M}$.
We have
\begin{align*} 
\sum_{\alpha \in H} X_{\alpha}X_{\alpha^{-1}\beta} 
&= \sum_{\alpha \in M} mX_{\alpha^{-1}\beta} + \sum_{\alpha \notin M} lX_{\alpha^{-1}\beta} \\
&= \sum_{\alpha \in M \cap M_{\beta}} m^2 + \sum_{\alpha \in M \setminus M_{\beta}} ml + \sum_{\alpha \in M_{\beta} \setminus M} lm + \sum_{\alpha \in H \setminus (M \cup M_{\beta})} l^2 \\
&= |\C_{\beta, M}|m^2 + 2(|M| - |\C_{\beta,M}|)ml + (o(H) - 2|M| + |\C_{\beta,M}|)l^2\\
&= |\C_{\beta, M}|(m-l)^2 + 2|M|ml + o(H)l^2 - 2|M|l^2\\
&= |\C_{\beta, M}|(m-l)^2 + 2[k+|M|l - o(H)l]l + o(H)l^2 - 2|M|l^2\\
&= |\C_{\beta, M}|(m-l)^2 + 2kl - o(H)l^2
\end{align*}
Combined with Equation \ref{post}, we have
\begin{equation}\label{eqtwoint}
|\C_{\beta,M}|(m-l)^2 + 2kl - o(H)l^2 = 
\begin{cases}
\mu o(N) &\text{if $\beta \ne 1$,}\\
\mu o(N)+n &\text{if $\beta = 1$.}
\end{cases}
\end{equation}
Equation \ref{eqtwoint} has several immediate implications, which we summarize in a theorem.

\begin{theorem}\label{twointtheorem}
$\mbox{}$
\begin{enumerate}
\renewcommand{\labelenumi}{(\roman{enumi})}
 \item $|\C_{\beta,M}| = \omega$ is constant for all $\beta \ne 1$.
 Hence, $M$ is a $(o(H), |M|, \omega)$ sum set in $H$.
 In particular, $M$ is a subgroup of $H$ if and only if $M = \{1\}$ or $M = H$.
 \item $n = (|\C_{1,M}| - \omega)(m-l)^2$.
 \item If $M = \{1\}$, then
 \[ l = \frac{k \pm \sqrt{n}}{o(H)}. \]
 In particular $n$ must be a square.
 Moreover, the parameters of the complementary sum set satisfy the same equation with the opposite sign chosen.
 \item If $M = H$, then $n=0$ and $m = \frac{\mu o(N)}{k}$.
\end{enumerate}
\end{theorem}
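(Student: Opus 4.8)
The plan is to read off all four assertions from Equation~\ref{eqtwoint}, together with the two-value form of Equation~\ref{partitionequation}, namely $m|M| + l(o(H) - |M|) = k$; throughout one assumes $m \ne l$, since otherwise there is a single intersection size and nothing to say. For part~(i), observe that in Equation~\ref{eqtwoint} the quantity $2kl - o(H)l^2$ is independent of $\beta$, and that for $\beta \ne 1$ the right-hand side is the constant $\mu\, o(N)$. Hence $|\C_{\beta,M}|(m-l)^2$ does not depend on $\beta$, and since $(m-l)^2 \ne 0$ the number $|\C_{\beta,M}| =: \omega$ is constant over all $\beta \ne 1$. As $|\C_{\beta,M}|$ counts the ways of writing $\beta$ as a product in $M$, this is exactly the statement that $M$ is an $(o(H),|M|,\omega)$ sum set in $H$. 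For the subgroup dichotomy, $\{1\}$ and $H$ are plainly subgroups; conversely, if $M$ were a subgroup with $1 < |M| < o(H)$, then $|\C_{\beta,M}| = |M|$ for any nonidentity $\beta \in M$ but $|\C_{\beta,M}| = 0$ for any $\beta \notin M$, contradicting the constancy of $\omega$.

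Part~(ii) follows by subtracting the $\beta \ne 1$ case of Equation~\ref{eqtwoint} from the $\beta = 1$ case: the terms $2kl - o(H)l^2$ and $\mu\, o(N)$ cancel and one is left with $n = (|\C_{1,M}| - \omega)(m-l)^2$.

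For part~(iii), set $M = \{1\}$. The two-value partition equation becomes $m + l(o(H) - 1) = k$, i.e.\ $m - l = k - l\, o(H)$, while directly from the definition $\C_{\beta,\{1\}} = \varnothing$ for $\beta \ne 1$ and $\C_{1,\{1\}} = \{1\}$, so $\omega = 0$ and $|\C_{1,M}| = 1$. Feeding these into part~(ii) gives $n = (m-l)^2 = (k - l\, o(H))^2$; in particular $n$ is the square of an integer, and extracting the root yields $l\, o(H) = k \mp \sqrt{n}$, that is, $l = \frac{k \pm \sqrt{n}}{o(H)}$. For the complementary sum set $G \setminus S$, with parameters $(v, v-k, v-2k+\mu)$, a short computation shows its invariant is $(v-k)^2 - (v-2k+\mu)v = k^2 - \mu v = n$; its coset intersection numbers are $o(N) - m$ and $o(N) - l$, so its exceptional coset is still the identity coset, and $o(N) - l = \frac{(v-k) \mp \sqrt{n}}{o(H)}$, which is the displayed formula with the sign reversed.

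Finally, for part~(iv) take $M = H$. The partition equation collapses to $m\, o(H) = k$, and since every $X_\alpha$ equals $m$, Equation~\ref{post} gives $o(H)m^2 = \mu\, o(N)$ for each $\beta \ne 1$; comparing with the $\beta = 1$ instance forces $n = 0$, and eliminating $o(H)$ between $m\, o(H) = k$ and $o(H)m^2 = \mu\, o(N)$ produces $m = \frac{\mu\, o(N)}{k}$. I expect no serious obstacle in any of this: the content is entirely bookkeeping around Equation~\ref{eqtwoint}. The two places that warrant care are (a)~stating explicitly the standing hypothesis $m \ne l$, without which $|\C_{\beta,M}|$ need not be constant and the subgroup statement in~(i) can fail, and (b)~the complementation claim in~(iii), where one must check both that the invariant $n$ is unchanged under complementation and that the exceptional coset remains $N_1$, so that the sign in the formula for $l$ is precisely flipped.
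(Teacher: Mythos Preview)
Your proposal is correct and follows essentially the same route as the paper: all four parts are read off from Equation~\ref{eqtwoint} (together with the partition identity), with the same subtraction for~(ii) and the same specializations $M=\{1\}$ and $M=H$ for~(iii) and~(iv). The only cosmetic differences are that for~(iii) you obtain $l$ via part~(ii) and the relation $m-l = k - l\,o(H)$, whereas the paper substitutes $|\C_{\beta,M}|=0$ into Equation~\ref{eqtwoint} and solves the resulting quadratic $2kl - o(H)l^2 = \mu\,o(N)$ directly; and for~(iv) you return to Equation~\ref{post}, while the paper stays with Equation~\ref{eqtwoint} after noting the first term vanishes. Your explicit handling of the hypothesis $m\ne l$ and the complement verification in~(iii) are welcome clarifications the paper leaves implicit.
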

\begin{proof}
That $M$ is a $(o(H), |M|, \omega)$ sum set in $H$ is clear.
Hence, $M$ is a subgroup of $H$ only if $M$ is a trivial subgroup, i.e. $M = \{1\}$ or $M = H$, in which case $M$ is a trivial sum set as well.
This proves the first claim.

To prove the second claim, simply subtract Equation \ref{eqtwoint} with $\beta = 1$ from the same equation with $\beta \ne 1$.

For the third claim, note the condition $M = \{1\}$ is equivalent to $|\C_{\beta,M}| = 0$ for all $\beta \ne 1$.
Substituting into Equation \ref{eqtwoint}, we get
\[ 2kl - o(H)l^2 = \mu o(N). \]
Solving this quadratic for $l$ yields the claimed result.
That $n$ must be a square is clear since $l$ must be an integer.
Considering the complement easily verifies the remainder of the third claim.

Finally, suppose $M = H$.
Then we must have $m = l$, so the first term in Equation \ref{eqtwoint} vanishes, rendering the left-hand side independent of $\beta$.
It follows that $n=0$.
Hence, for any $\beta \in H$ we have 
\[ 2km - o(H)m^2 = \mu o(N). \]
But we also have $k = o(H)m$, and substituting yields $km = \mu o(N)$.
The last claim follows.
\end{proof}

On the surface, the most tantalizing outcome of Theorem \ref{twointtheorem} is that even distribution of a sum set over the cosets of a normal subgroup induces a sum set in the corresponding quotient group.
The sum sets in the quotient group may be trivial, however, though even then we can deduce much about the original sum set in $G$.

Consider a group $G$ which has a normal subgroup $N$ of index 2.
As there are only two cosets of $N$ in $G$, any sum set $S$ in $G$ must intersect the cosets of $N$ in at most two values.
Regardless of how $S$ is distributed among the cosets, the corresponding sum set in the quotient group will be trivial (as the quotient group has order 2).
Nonetheless, the fact that there can be at most two intersection sizes of $S$ with cosets of $N$ leads to a restriction of the possible parameters of any sum set in $G$:
\begin{theorem}\label{index2}
Suppose $S$ is a $(v,k,\mu)$ sum set in $G$.
If $G$ has a normal subgroup $N$ of index 2, then $n$ is a square.
In particular, if $k$ is odd, then $n$ must be a nonzero square.
\end{theorem}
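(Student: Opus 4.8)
The plan is to feed the quotient $H = G/N$ into Equation~\ref{post}. Since $N$ has index $2$, $H$ has order $2$; write its nonidentity element as $t$, so $t^{2} = 1$ and $o(N) = v/2$. Put $a = |S \cap N|$ and $b = |S \cap (G \setminus N)|$, the two coset intersection numbers. Because $H$ has only two elements, each sum $\sum_{\alpha \in H} X_{\alpha}X_{\alpha^{-1}\beta}$ in Equation~\ref{post} has exactly two summands: taking $\beta = t$ gives $2ab = \mu\, o(N)$, while taking $\beta = 1$ and using $t^{-1} = t$ gives $a^{2} + b^{2} = \mu\, o(N) + n$.

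Subtracting the first identity from the second yields $(a-b)^{2} = n$, so $n$ is a perfect square; this is the first assertion. (The same conclusion can be extracted from Theorem~\ref{twointtheorem}: $a$ and $b$ are precisely the two values $m$ and $l$ occurring there, with $M$ one of $\{1\}$, $\{t\}$, or $H$, and in each case part (iii) or part (iv) forces $n$ to be a square.)

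For the sharper statement, recall from Equation~\ref{partitionequation} that $a + b = k$. If $k$ is odd then $a$ and $b$ have opposite parity, so $a - b$ is odd and in particular nonzero; hence $n = (a-b)^{2}$ is a nonzero square.

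There is no serious obstacle here: a subgroup of index $2$ is automatically normal, and the heart of the argument is just the identity $(a-b)^{2} = (a^{2}+b^{2}) - 2ab$. The one spot deserving a moment's attention is that the $\beta = 1$ instance of Equation~\ref{post} contributes $a^{2} + b^{2}$ rather than a cross term, which uses that $t$ is its own inverse in $H$; and, if one takes the alternative route through Theorem~\ref{twointtheorem}, that both ways of assigning the two intersection sizes to the cosets are accounted for by its case analysis.
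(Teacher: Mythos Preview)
Your argument is correct and essentially the same as the paper's: both extract the conclusion from the two coset intersection numbers of $N$, with the paper phrasing the case split via Theorem~\ref{twointtheorem} (parts (iii) and (iv)) and you instead computing directly from Equation~\ref{post} to get $(a-b)^2 = n$. Your direct computation is a touch more self-contained, but the content is identical, and you even note the Theorem~\ref{twointtheorem} route yourself.
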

\begin{proof}
Let $G/N = \{ N_1, N_{\beta} \}$, so $H = \{1,\beta\}$.
If $X_1 = X_{\beta}$, then necessarily $X_1 = X_{\beta} = \frac{k}{2}$ and $n=0$.
If $X_1 \ne X_{\beta}$, then without loss of generality $M = \{1\}$, hence $n$ is a square.
If $k$ is odd, then the first conclusion is impossible, and the second conclusion is possible only if $n$ is nonzero.
\end{proof}
It should be noted Sumner and Butson \cite{sumnerbutson} prove that the
parameter $n$ is necessarily a square, though possibly $n=0$.
We include the above somewhat weaker result as it follows very naturally and
directly from our discussion.

We may obtain similar results under the assumption that $G$ has a normal subgroup of index 3.
If a sum set intersects the three cosets of this subgroup in at most 2 distinct amounts, then Theorem \ref{twointtheorem} applies.
If the sum set intersects the cosets in three distinct amounts, we obtain the following.
\begin{theorem}\label{index3}
Suppose $S$ is a $(v,k,\mu)$ sum set in $G$, and suppose $G$ possesses a normal subgroup $N$ of index 3.
If $S$ intersects the cosets of $N$ in $G$ in three distinct values, then $3$ divides $k$, $|S \cap N| = \frac{k}{3}$, and $n = -3x^2$ for some integer $x \ne 0$. 
\end{theorem}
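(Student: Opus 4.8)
The plan is to apply Equation~\ref{post} directly with $H = G/N$ cyclic of order $3$ and read the three stated conclusions off the resulting system. Write $H = \{1,\beta,\beta^2\}$ with $\beta^3 = 1$, and abbreviate $a = X_1$, $b = X_\beta$, $c = X_{\beta^2}$. Since $N_1 = N$ is the subgroup itself, $a = |S\cap N|$, and Equation~\ref{partitionequation} reads $a + b + c = k$. By hypothesis $a,b,c$ are pairwise distinct.

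Next I would instantiate Equation~\ref{post} for each $\beta' \in H$. Because $H$ is abelian and $\beta^{-1} = \beta^2$, a short bookkeeping of the index $\alpha^{-1}\beta'$ gives
\[
a^2 + 2bc = \mu o(N) + n, \qquad c^2 + 2ab = \mu o(N), \qquad b^2 + 2ac = \mu o(N),
\]
corresponding to $\beta' = 1$, $\beta' = \beta$, and $\beta' = \beta^2$ respectively; note that the (unknown) quantity $\mu o(N)$ will be eliminated by taking differences.

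Subtracting the last two equations yields $(c-b)(b + c - 2a) = 0$, and since $b \ne c$ we conclude $b + c = 2a$; together with $a + b + c = k$ this forces $3a = k$, so $3 \mid k$ and $|S \cap N| = a = k/3$. For the final assertion, subtract the second equation from the first to get $n = (a-c)(a + c - 2b)$; substituting $c = 2a - b$ turns this into $n = -3(a-b)^2$. Setting $x = a - b$, which is nonzero because $a \ne b$, gives $n = -3x^2$ with $x \ne 0$, as claimed.

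I do not anticipate a genuine obstacle here: the content is entirely carried by Equation~\ref{post}, and the only points requiring care are the index arithmetic $\alpha \mapsto \alpha^{-1}\beta'$ when writing down the three equations, and recording that the distinguished coset $N_1$ is precisely $N$ so that the ``$k/3$'' in the statement refers to $X_1$. One could alternatively derive the system by viewing the vector $(X_\alpha)$ as satisfying the autocorrelation identity underlying Theorem~\ref{twointtheorem}, but the direct computation above is the shortest route.
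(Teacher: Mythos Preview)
Your proof is correct and follows essentially the same approach as the paper: both instantiate Equation~\ref{post} for the cyclic quotient $H$ of order $3$, subtract the two non-identity-coset equations to force $X_1 = k/3$, and then read off $n = -3x^2$ by a short substitution. The only cosmetic difference is that you invoke the $\beta' = 1$ equation (carrying the $+n$ term) and subtract, whereas the paper parameterises $X_h = k/3 + x$, $X_{h^2} = k/3 - x$ and plugs into one of the $\beta' \ne 1$ equations; these are equivalent routes through the same linear algebra.
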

\begin{proof}
Since $N$ has index 3 in $G$, $H = \{1, h, h^2 \}$, the cyclic group of order 3.
Applying Equation \ref{post} to the cases $\beta = h$ and $\beta = h^2$, we obtain
\begin{align*}
X_{h^2}^2 + 2X_1X_h &= \mu o(N), \\
X_h^2 + 2X_1X_{h^2} &= \mu o(N), 
\end{align*}
respectively.
Combined, these equations yield
\[ X_h^2 - X_{h^2}^2 = 2X_1(X_h - X_{h^2}). \]
By assumption $X_h \ne X_{h^2}$ so we may divide by $X_h - X_{h^2}$ to obtain
\[ X_h + X_{h^2} = 2X_1. \]
But we also know $X_1 + X_h + X_{h^2} = k$, hence $|S \cap N| = X_1 = \frac{k}{3}$.

Set $X_h = \frac{k}{3} + x$ for some integer $x \ne 0$ so that $X_{h^2} = \frac{k}{3} - x$.
Substituting these values for $X_h$ and $X_{h^2}$ into $X_{h^2}^2 + 2X_1X_h = \mu o(N)$, we obtain
\[ \mu o(N) - \frac{k^2}{3} = x^2, \]
which is equivalent to the final claim.
\end{proof}

\section{Higher-order regularity and Abelian Sum Sets}\label{abeliansumsets}

Whenever we have said an element $a$ can be generated ``as a product in S'', it has been implied there exist elements $x,y \in S$ such that $a = xy$.
In other words, we have only considered the notion of writing an element as the product of two elements of our set $S$.
One might wonder whether our notion of additive regularity can be extended to include sets for which every nonidentity element of the ambient group can be expressed some constant number of times as a product of three (or four, or five, etc...) elements of the set.
Intuitively, we refer to this extended notion as ``higher-order regularity.''
 
In what follows, we demonstrate that additive regularity implies higher-order regularity.
Specifically, if $S$ is a sum set in the group $G$, then the number of ways to write elements of $G$ as the product of $j$ elements of $S$ -- for any $j \ge 2$ -- is predictable and (almost) constant.
This fact, interestingly, proves to be the key to characterizing abelian sum sets.

The calculations that follow are most easily carried out in the integral group ring $\mathbb{Z}G$.
This ring consists of all formal sums
\[ \sum_{g \in G} a_gg \]
with $a_g \in \mathbb{Z}$.
For each element $g\in G$ there is a corresponding element $1g \in \mathbb{Z}G$, though for simplicity we identify $1g$ with $g$.
Similarly, for any $a_1 \in \mathbb{Z}$ write $a_11 \in \mathbb{Z}G$ simply as $a_1$.
Each subset of $S \subseteq G$ corresponds to a group ring element $\sum_{g \in S}g$, which we write simply as $S$.
Reusing notation in this fashion is convenient because we will often be able to use facts about the group ring element $S \in \mathbb{Z}G$ to deduce facts about the subset $S \subset G$.

Addition in $\mathbb{Z}G$ is defined component-wise:
\[ \sum_{g \in G}a_gg + \sum_{g \in G}b_gg = \sum_{g \in G}(a_g+b_g)g \]
If we define $(ag)(bh) = (ab)(gh)$ for $a,b \in \mathbb{Z}$ and $g,h \in G$, then we can extend this rule via the standard distributive laws to define multiplication in $\mathbb{Z}G$.
Expressions such as $X^t$, where $X \in \mathbb{Z}G$ and $t \in \mathbb{N}$ are then understood to mean multiplication of $X$ with itself $t$ times.

Finally, for any $X = \sum_{g \in G}a_gg \in \mathbb{Z}G$ and any integer $t$, define $X^{(t)} = \sum_{g \in G}a_gg^t$.
Note that $X^{(t)} \ne X^t$ in general!
This notation has the potential to create confusion, since if $S \subseteq G$, then we may use the same notation to define the {\it set} $S^{(t)} = \{ s^t : s \in S \}$.
In this instance, the elements appearing in the {\it set} $S^{(t)}$ will be precisely those elements whose coefficient in the {\it group ring element} $S^{(t)}$ is nonzero.
However, if there exist $r$ distinct elements in $G$ whose $t^{th}$ powers equal some element $g$, then the coefficient of $g$ in the group ring element $S^{(t)}$ will be $r$, while of course $g$ will appear but once in the set $S^{(t)}$.
Note that when $t$ is relatively prime to the order of $G$, there is no potential for ambiguity.
So, for example, if $S \subseteq G$ we may write $S^{(-1)}$ to mean both the set of inverses of $S$ and the group ring element $\sum_{s \in S}s^{-1}$ without fear of confusion.

\begin{lemma}\label{powerslemma}
If $S$ is a $(v,k,\mu)$ sum set, then for any $m \ge 1$, 
\[ S^{2m} = \frac{1}{v}(k^{2m} - n^m)G + n^m. \]
\end{lemma}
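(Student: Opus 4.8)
The plan is to work entirely inside $\mathbb{Z}G$ and reduce everything to two ingredients: a closed form for $S^2$, and the multiplicative behaviour of the group ring element $G = \sum_{g\in G} g$. First I would identify $S^2$. The coefficient of a group element $g$ in $S^2 = \sum_{s,t\in S} st$ is precisely the number of ways to write $g$ as a product in $S$, that is $|\C_{g,S}|$. Since $S$ is a $(v,k,\mu)$ sum set this count equals $\mu$ for every nonidentity $g$, while for $g=1$ it equals $|S\cap S^{(-1)}|$, counting the pairs $(s,s^{-1})$ with $s,s^{-1}\in S$. Hence
\[ S^2 = |S\cap S^{(-1)}|\cdot 1 + \mu(G-1) = \mu G + n, \]
where $n = |S\cap S^{(-1)}| - \mu = k^2 - \mu v$ as in Equation \ref{parameq}. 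This is exactly the asserted identity for $m=1$, since $k^2 - n = \mu v$.

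Next I would record the elementary facts $G^2 = vG$ in $\mathbb{Z}G$ and, by an immediate induction, $G^j = v^{j-1}G$ for all $j\ge 1$; note also that $1$ is central, so $\mu G$ and $n\cdot 1$ commute. The main computation is then just the binomial theorem applied to $S^{2m} = (S^2)^m = (\mu G + n)^m$: expanding, using $G^j = v^{j-1}G$, and pulling out the $j=0$ term gives
\[ (\mu G + n)^m = \sum_{j=0}^{m}\binom{m}{j}\mu^j G^j n^{m-j} = n^m + \frac{G}{v}\sum_{j=1}^{m}\binom{m}{j}(\mu v)^j n^{m-j} = n^m + \frac{G}{v}\bigl((\mu v + n)^m - n^m\bigr). \]
Substituting $\mu v + n = k^2$ then yields $S^{2m} = \tfrac{1}{v}(k^{2m} - n^m)G + n^m$, as required. (Alternatively one can induct on $m$ directly: multiply the inductive hypothesis by $S^2 = \mu G + n$, collapse $G^2 = vG$, collect the coefficient of $G$, and use $\mu v + n = k^2$ to recognise it as $\tfrac{1}{v}(k^{2(m+1)} - n^{m+1})$; this avoids the binomial manipulation at the cost of a slightly longer computation.)

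I do not anticipate a genuine obstacle: the content is entirely the observation $S^2 = \mu G + n$ together with $\mu v + n = k^2$, and the remainder is bookkeeping. The only points deserving a little care are separating the $j=0$ term before dividing by $v$, and observing that the stated formula even makes sense in $\mathbb{Z}G$, i.e.\ that $v$ divides $k^{2m} - n^m$; but this is automatic, since $n \equiv k^2 \pmod v$ forces $n^m \equiv k^{2m}\pmod v$ (and in any event $S^{2m}$ visibly has integer coefficients).
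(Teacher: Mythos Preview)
Your proof is correct and rests on the same key identity $S^2 = \mu G + n$ together with $\mu v + n = k^2$ that the paper uses. The paper carries out the induction you mention as your alternative, whereas you lead with the binomial expansion of $(\mu G + n)^m$; these are cosmetic variants of the same argument.
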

\begin{proof}
We induct on $m$.
When $m = 1$, the equation reduces to $S^2 = \mu G + n$, which is correct.
Now assume the result holds for some $m \ge 1$ and consider $S^{2(m+1)}$:
\begin{align*}
S^{2(m+1)} = S^{2m}S^2 &= (\frac{1}{v}(k^{2m}-n^m)G+n^m)(\mu G + n) \\
&= \frac{\mu}{v}(k^{2m}-n^m)vG+\frac{n}{v}(k^{2m}-n^m)G+\mu n^mG + n^{m+1} \\
&= \frac{1}{v}[(k^{2m}-n^m)(\mu v+n)+(k^2-n)n^m]G + n^{m+1} \\
&= \frac{1}{v}[(k^{2m}-n^m)k^2 + (k^2-n)n^m]G + n^{m+1} \\
&= \frac{1}{v}(k^{2m+2}-n^{m+1})G + n^{m+1}. \end{align*}
This completes the induction.
\end{proof}
Since in $\mathbb{Z}G$, $GS = SG = |S|G$ for any subset $S$, we also have
\begin{corollary}\label{powerscorollary}
If $S$ is a $(v,k,\mu)$ sum set, then for any $m \ge 1$,
\[ S^{2m+1} = \frac{k}{v}(k^{2m} - n^m)G + n^mS. \]
\end{corollary}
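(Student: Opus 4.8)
The plan is to derive the odd-power formula directly from Lemma~\ref{powerslemma} by multiplying that identity through by the group ring element $S$, exploiting the fact that $G$ absorbs any subset to a scalar multiple of $G$. Concretely, for $m \ge 1$ I would write $S^{2m+1} = S^{2m}S$ and substitute the expression for $S^{2m}$ furnished by Lemma~\ref{powerslemma}, namely $S^{2m} = \tfrac{1}{v}(k^{2m}-n^m)G + n^m$, where the scalar $n^m$ is read as the group ring element $n^m\cdot 1$.

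Distributing gives $S^{2m+1} = \tfrac{1}{v}(k^{2m}-n^m)\,GS + n^m S$. At this point I would invoke the identity $GS = SG = |S|\,G = kG$ in $\mathbb{Z}G$, valid for any subset $S \subseteq G$: left (or right) multiplication by a fixed group element permutes $G$, so each element of $G$ occurs exactly $|S|$ times in the product $GS$. Substituting $GS = kG$ yields $S^{2m+1} = \tfrac{k}{v}(k^{2m}-n^m)G + n^m S$, which is precisely the claimed formula. Since $GS = SG$, it is immaterial whether Lemma~\ref{powerslemma} is multiplied by $S$ on the left or on the right, and no separate induction is needed.

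There is essentially no obstacle here: the corollary is a one-line consequence of Lemma~\ref{powerslemma} together with the absorbing property of $G$ in $\mathbb{Z}G$, which is exactly what the sentence preceding the statement already signals. The only points warranting a moment's care are (i) that the scalar $n^m$ multiplies $S$ to give $n^m S$, immediate from the definition of multiplication in $\mathbb{Z}G$, and (ii) a sanity check of the base case $m=1$, which should recover $S^3 = \tfrac{k}{v}(k^2-n)G + nS$, consistent with multiplying $S^2 = \mu G + n$ by $S$ and using $k^2 - n = \mu v$.
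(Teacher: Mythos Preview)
Your proposal is correct and matches the paper's own approach exactly: the corollary is stated immediately after the observation that $GS = SG = |S|G$ in $\mathbb{Z}G$, and is obtained precisely by multiplying the formula of Lemma~\ref{powerslemma} through by $S$. No separate argument is given in the paper beyond this, so your write-up is, if anything, more detailed than the original.
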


These results are aesthetically satisfying, for they imply that additive regularity at a base level is enough to ensure regularity at all higher levels.
However, the corollary has a rather surprising consequence in the case where $G$ is abelian.
This consequence -- that every abelian sum set satisfies $S = S^{(-1)}$ -- is the main result of this section.
To prove it, we will need the following well-known fact about arithmetic in $\mathbb{Z}G$ when $G$ is abelian:
\begin{lemma}[\cite{jungnickel92} Lemma 3.3]\label{plemma}
Let $p$ be a prime.
If $G$ is abelian, then for any $S \in \mathbb{Z}G$, 
\[ S^p \equiv S^{(p)} \bmod{p}. \]
\end{lemma}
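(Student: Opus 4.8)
The plan is to reduce the congruence modulo $p$ and recognise it as an instance of the Frobenius endomorphism. Write $R = (\mathbb{Z}/p\mathbb{Z})G$ for the group ring of $G$ over $\mathbb{Z}/p\mathbb{Z}$, obtained from $\mathbb{Z}G$ by reducing all coefficients modulo $p$; then proving the lemma amounts to showing $S^p = S^{(p)}$ in $R$ for every $S \in R$. Since $G$ is abelian, $R$ is a commutative ring, and it has characteristic $p$.

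First I would record the standard fact that in any commutative ring of characteristic $p$ the $p$-th power map is additive: $(x+y)^p = x^p + y^p$, because every binomial coefficient $\binom{p}{i}$ with $0 < i < p$ is divisible by $p$ (the numerator $p!$ contains the prime $p$ as a factor while the denominator $i!\,(p-i)!$ does not). Combined with $(xy)^p = x^p y^p$, valid by commutativity, this makes $x \mapsto x^p$ a ring endomorphism of $R$, and a short induction on the number of summands upgrades additivity to $\bigl(\sum_i x_i\bigr)^p = \sum_i x_i^p$ for any finite sum.

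Next I would apply this to $S = \sum_{g \in G} a_g g$ (coefficients now read in $\mathbb{Z}/p\mathbb{Z}$), obtaining $S^p = \sum_{g \in G}(a_g g)^p = \sum_{g \in G} a_g^p\, g^p$ in $R$. Fermat's little theorem gives $a_g^p \equiv a_g \pmod p$, so $a_g^p = a_g$ in $\mathbb{Z}/p\mathbb{Z}$ and hence $S^p = \sum_{g \in G} a_g\, g^p = S^{(p)}$ in $R$; lifting the reduction back to $\mathbb{Z}G$ this is precisely $S^p \equiv S^{(p)} \pmod p$. Alternatively, one can stay inside $\mathbb{Z}G$ and argue via the multinomial theorem, $S^p = \sum \frac{p!}{\prod_{g} m_g!}\,\bigl(\prod_{g} a_g^{m_g}\bigr)\bigl(\prod_{g} g^{m_g}\bigr)$, the sum running over families $(m_g)_{g\in G}$ of nonnegative integers with $\sum_{g} m_g = p$ --- it is here that commutativity of $G$ is used, so that the group factor depends only on the $m_g$ --- and noting that the multinomial coefficient is divisible by $p$ unless some $m_g$ equals $p$, giving $S^p \equiv \sum_{g} a_g^p g^p \equiv \sum_{g} a_g g^p \pmod p$.

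The hard part will be essentially nonexistent: the lemma is the ``freshman's dream'' in group-ring clothing, and the only care needed is the bookkeeping that uses commutativity of $G$ correctly together with the elementary divisibility $p \mid \binom{p}{i}$ for $0 < i < p$. What is worth emphasising is that abelianness of $G$ is genuinely essential --- in a noncommutative group ring the $p$-th power map is not additive --- which is exactly why both this lemma and the standing abelian hypothesis will be indispensable for the characterisation of abelian sum sets to follow.
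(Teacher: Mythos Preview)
Your argument is correct and is the standard proof: pass to the commutative ring $(\mathbb{Z}/p\mathbb{Z})G$, use that the Frobenius $x\mapsto x^p$ is a ring endomorphism there (via $p\mid\binom{p}{i}$ for $0<i<p$), and finish with Fermat's little theorem on the coefficients. The multinomial alternative you sketch is equally valid and makes the role of commutativity of $G$ explicit.

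There is nothing to compare against, however: the paper does not supply its own proof of this lemma. It is quoted as a well-known fact with a citation to Jungnickel's survey (Lemma~3.3 there) and then used as a black box in the proof of Theorem~\ref{abelianreversible}. Your write-up would serve perfectly well as the omitted justification.
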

We will also use the fact that for any odd prime $p$, a natural number $n$ relatively prime to $p$ is a quadratic residue modulo $p$ if and only if $n^{(p-1)/2} \equiv 1 \bmod{p}$.

The outline of the proof is as follows: first we prove that if $S$ is an abelian sum set, then $n \ne 0$.
Next, we show $n \ne 0$ implies $n$ is a square.
From there, we are able to deduce that $S^{(p)} = S$ for infinitely many primes $p$, which leads to the result.

\begin{theorem}\label{abelianreversible}
If $S$ is a $(v,k,\mu)$ sum set in an abelian group $G$, then $S$ is reversible.
\end{theorem}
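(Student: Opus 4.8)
The plan is to follow the outline the authors sketched immediately before the statement: first show $n \neq 0$, then show $n$ is a perfect square, then leverage Corollary \ref{powerscorollary} to conclude $S^{(p)} = S$ for infinitely many primes $p$, and finally deduce $S = S^{(-1)}$.

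\textbf{Step 1: $n \neq 0$.} Suppose for contradiction that $n = 0$. Then Lemma \ref{powerslemma} gives $S^{2m} = \frac{k^{2m}}{v}G$ for every $m \geq 1$, and in particular $S^2 = \frac{k^2}{v}G = \mu G$. Comparing coefficients, $\mu = k^2/v$, and since $S^2 = \mu G$ the group ring element $S^2$ has every coefficient equal to $\mu$. I would extract a contradiction from the fact that $G$ is abelian: in $S^2$ the identity has coefficient $|S \cap S^{(-1)}|$, and $n = 0$ forces $|S \cap S^{(-1)}| = \mu$. But by Lemma \ref{mueven}, in an abelian group $\mu$ is even, and one can count the contribution of squares $s^2$ to the coefficient of each group element; the parity argument in the proof of Lemma \ref{mueven} (products of two \emph{distinct} elements generate everything an even number of times, so the squares alone must account for the parity of $\mu$, i.e. zero squares are needed only if $\mu$ is even — but then one needs to rule out $\mu$ being a positive even number compatible with $n = 0$). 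More cleanly: $n = 0$ together with $k \le v/2$ and Theorem \ref{twointtheorem}(iv)-type reasoning, or a direct divisibility squeeze on $k^2 = \mu v$, should collapse $S$ to a trivial sum set. I expect the cleanest route is: $n=0 \Rightarrow k^2 = \mu v \Rightarrow v \mid k^2$; combined with $k \le v/2$ this is quite restrictive, and the abelian parity constraint $\mu$ even finishes it.

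\textbf{Step 2: $n$ is a square.} Assuming $n \neq 0$, I would work modulo a prime $p$. Pick any odd prime $p$ not dividing $vn k$. By Lemma \ref{plemma}, $S^p \equiv S^{(p)} \pmod p$. On the other hand, writing $p = 2m+1$, Corollary \ref{powerscorollary} gives $S^p = \frac{k}{v}(k^{2m} - n^m)G + n^m S$. Reducing mod $p$ and using $k^{2m} = k^{p-1} \equiv 1$ and $n^m = n^{(p-1)/2} \equiv \pm 1 \pmod p$ (Euler's criterion), the $G$-term's coefficient $\frac{k}{v}(k^{2m}-n^m)$ is an integer that is $\equiv 0 \pmod p$ exactly when $n^{(p-1)/2} \equiv 1$, i.e. when $n$ is a QR mod $p$. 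So for $p$ with $n$ a non-residue, $S^{(p)} \equiv n^m S + (\text{nonzero multiple of } G) \pmod p$; comparing the coefficient structure of $S^{(p)}$ (which is a $0/1$-ish sum of group elements with multiplicities) against $n^m S$ plus a \emph{constant} vector forces a contradiction unless that constant is $0 \bmod p$ — because $S^{(p)}$ and $S$ both have coefficient sum $k$, while adding $c \cdot G$ with $c \not\equiv 0$ shifts the sum by $cv \equiv 0$... so instead the contradiction must come from comparing individual coefficients: $S^{(p)}$ has all coefficients in a bounded range determined by $k$ and the number of $p$-th roots, whereas $n^m S + cG$ with $c \not\equiv 0 \pmod p$ has coefficients that are $\pm 1$ off a nonzero constant mod $p$. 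Pushing this through shows $n$ must be a QR modulo \emph{every} sufficiently large prime $p$, and a natural number that is a quadratic residue mod almost all primes is a perfect square.

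\textbf{Step 3: conclude $S^{(p)} = S$ and finish.} Once $n = \ell^2$ is a nonzero square, for every odd prime $p \nmid v n k$ we have $n^{(p-1)/2} = \ell^{p-1} \equiv 1 \pmod p$, so the $G$-coefficient in $S^p$ reduces to $0 \pmod p$ and Corollary \ref{powerscorollary} combined with Lemma \ref{plemma} gives $S^{(p)} \equiv n^{(p-1)/2} S \equiv S \pmod p$. Since the coefficients of $S^{(p)} - S$ are bounded integers independent of $p$ (each coefficient of $S^{(p)}$ is at most $k$ in absolute value), and they vanish mod infinitely many primes, they must vanish identically: $S^{(p)} = S$ as group ring elements for such $p$. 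By Dirichlet, choose a prime $p \equiv -1 \pmod{e}$ where $e = \exp(G)$ and $p \nmid vnk$; then $g^p = g^{-1}$ for all $g \in G$, so $S^{(p)} = S^{(-1)}$, and $S^{(p)} = S$ yields $S = S^{(-1)}$, i.e. $S$ is reversible.

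\textbf{Main obstacle.} The delicate point is Step 2: extracting ``$n$ is a QR mod $p$'' from the mod-$p$ congruence must be done by comparing coefficients of specific group elements, not the coefficient sums (which are insensitive mod $p$ since $v \equiv 0$ could fail — actually $p \nmid v$, so $cG$ does shift the sum by $cv \not\equiv 0$, which is itself the contradiction). I would make sure to isolate the identity coefficient or use $p \nmid v$ to run the coefficient-sum argument: $S^{(p)}$ and $S$ both have coefficient-sum $k$, so $\frac{k}{v}(k^{2m}-n^m)\cdot v \equiv 0 \pmod p$ forces $\frac{k}{v}(k^{2m}-n^m) \equiv 0$, hence (as $p \nmid k$) $n^{(p-1)/2} \equiv 1 \pmod p$. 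That is the crux, and it is where care with the group-ring-versus-set distinction for $S^{(p)}$ matters most.
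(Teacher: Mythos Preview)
Your outline matches the paper's, but Steps~1 and~2 both contain genuine gaps, and the ``crux'' you isolate at the end of Step~2 is in fact a tautology.

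\textbf{Step 1.} The divisibility/parity squeeze you propose does not rule out $n=0$. For instance, $(v,k,\mu)=(18,6,2)$ satisfies $k^2=\mu v$, $k\le v/2$, $\mu$ even, and $|S\cap S^{(-1)}|=\mu=2$; nothing in Lemma~\ref{mueven} or in the inequality $k\le v/2$ excludes these parameters. The paper's argument here is different and uses the group ring directly: if $n=0$ then Corollary~\ref{powerscorollary} gives $S^p=\mu k^{p-2}G$ for every odd prime $p$, so by Lemma~\ref{plemma} one has $S^{(p)}\equiv \mu k^{p-2}G\pmod p$. Choosing $p$ coprime to $\mu k$ (and to $v$) makes the right side a group-ring element with \emph{every} coefficient nonzero mod~$p$, whereas $S^{(p)}$ is a genuine subset of size $k<v$ and so has at least one zero coefficient. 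That is the contradiction; your parity considerations do not reach it.

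\textbf{Step 2.} Your final coefficient-sum argument is vacuous. With $p=2m+1$ and $c=\frac{k}{v}(k^{2m}-n^m)$, the coefficient sum of $cG+n^m S$ is $cv+n^m k = k(k^{2m}-n^m)+n^m k = k^{2m+1}=k^p$, and $k^p\equiv k\pmod p$ by Fermat. So the congruence of coefficient sums is automatic and yields no information about whether $n$ is a residue. The paper instead takes $p>v$ (so $p\nmid v,k,n$ automatically and $x\mapsto x^p$ is a bijection on $G$), and when $n^{(p-1)/2}\equiv -1$ computes
\[
vS^p \equiv 2kG - vS \pmod p.
\]
Now $p>v$ forces $p\nmid 2k$ and, since $n\ne 0$ gives $2k\ne v$, also $p\nmid(2k-v)$; hence \emph{every} element of $G$ has nonzero coefficient in $2kG-vS$ mod~$p$. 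But $vS^p\equiv vS^{(p)}\pmod p$ has exactly $k<v$ nonzero coefficients. That is the contradiction you need, and it is not a coefficient-sum argument but a support argument.

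\textbf{Step 3.} Your ``bounded integers vanish mod infinitely many primes'' phrasing is wrong, because $S^{(p)}-S$ is a different element for each $p$; you cannot fix the object and vary the prime. The correct statement is per-prime: once $p>v$, the map $x\mapsto x^p$ is a bijection, so $S^{(p)}$ is an honest subset and every coefficient of $S^{(p)}-S$ lies in $\{-1,0,1\}$; congruence to $0$ mod $p\ge 2$ then forces equality for that particular $p$. After that your Dirichlet step is fine (and essentially identical to the paper's).
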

\begin{proof}
Suppose $S$ is a $(v,k,\mu)$ sum set with $n=0$.
Then $k^2 = \mu v$.
For any odd prime $p$, Corollary \ref{powerscorollary} says
\[ S^p = \left( \frac{k}{v} \right) k^{p-1}G = \mu k^{p-2} G. \]
Hence by Lemma \ref{plemma}, we have
\[ \mu k^{p-2}G \equiv S^{(p)} \bmod{p}. \]
Provided $p$ is relatively prime to both $\mu$ and $k$ this is impossible, because the group ring element $\mu k^{p-2}G$ contains some nonzero constant number of copies of each element of $G$, while $S^{(p)}$ can not do this.
Hence, $n \ne 0$.
Next, suppose $n$ is not a square.
Then there must exist some odd prime $p>v$ such that $n^{(p-1)/2} \equiv -1 \bmod{p}$ (\cite{ireland}, Chapter 5, Theorem 3).
Thus,
\begin{align*}
vS^p &= kG(k^{p-1}-n^{\frac{p-1}{2}}) + vn^{\frac{p-1}{2}}S \\
&\equiv kG(1 - (-1)) + v(-1)S \bmod{p} \\
&\equiv 2kG - vS \bmod{p}. 
\end{align*}
Now $p > v$ implies that $p>k$, so $p$ can not divide $2k$.
Thus every element of $G \setminus S$ appears in the expression $2kG - vS$, taken modulo $p$.
Also, as $n \ne 0$, we know $2k \ne v$.
It follows that every element of $G$ appears in the expression $2kG - vS$, taken modulo $p$.
But by Lemma \ref{plemma}, we know that
\[ 2kG - vS \equiv vS^{(p)} \bmod{p}. \]
As before, this is impossible, so we conclude that if $S$ is a sum set in an abelian group, then $n$ must be a square.

If $n$ is a square, then $n$ is a square modulo $p$ for any prime $p$.
Consequently, if $p > v$, 
\begin{align*} 
S^p &= \frac{k}{v}G(k^{p-1}-n^{\frac{p-1}{2}}) + n^{\frac{p-1}{2}}S \\
&\equiv \frac{k}{v}G(1 - 1) + (1)S \\
&\equiv S \bmod{p}. \end{align*}
Lemma \ref{plemma} now says that $S^{(p)} \equiv S \bmod{p}$ for any prime $p > v$.
But clearly this is possible only if $S^{(p)} = S$.

Now we are ready to show that $S = S^{(-1)}$.
Let $x \in S$.
By Dirichlet's Theorem on arithmetic progressions, there exists a prime $p>v$ such that $p \equiv -1 \bmod{o(x)}$.
We know $S^{(p)} = S$, so $x^p = x^{-1} \in S$.
The proof is complete.
\end{proof}

In the case where $G$ is nonabelian, we can not deduce so much.
One reason is that Lemma \ref{plemma}, which was our primary tool in proving Theorem \ref{abelianreversible}, does not apply.
Although nonabelian sum sets exhibit the same higher-order regularity as abelian sum sets, it is unclear whether this fact can be exploited to gain as much information about the structure of nonabelian sum sets.

We have already seen in Theorem \ref{abelianreversible} that every abelian sum set is necessarily reversible.
Thus, we now have
\begin{corollary}\label{abeliancharacterization}
An abelian sum set is a reversible difference set.
In particular, there are no cyclic sum sets.
\end{corollary}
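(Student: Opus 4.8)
The first assertion follows immediately from what precedes it. Let $S$ be a $(v,k,\mu)$ sum set in an abelian group $G$. Theorem~\ref{abelianreversible} gives $S = S^{(-1)}$, so $S$ satisfies conditions (ii) and (iii) of Theorem~\ref{rdsss}; hence condition (i) holds, and $S$ is a $(v,k,\lambda)$ difference set with $S = S^{(-1)}$, i.e.\ a reversible difference set. (This does not preclude abelian sum sets altogether: by Theorem~\ref{rdsss} every reversible difference set is a sum set, and these exist, e.g.\ in elementary abelian $2$-groups. The content of the corollary is really the cyclic restriction below.)

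For the ``in particular'' statement, suppose $S$ is a nontrivial sum set in a cyclic group $G$ of order $v$; I aim for a contradiction. By the first part, $S$ is a reversible difference set in $G \cong \mathbb{Z}_v$, and Theorem~\ref{oddabelian} already excludes this when $v$ is odd, so assume $v$ is even. Write $G$ additively and let $N = \{0,v/2\}$ be the unique subgroup of order $2$. The plan is an infinite descent on $v$. First, for fixed nonzero $g$ the map $(a,b)\mapsto(-b,-a)$ is an involution on the set of representations $g = a-b$ with $a,b\in S$ (legitimate since $S = -S$), whose fixed points are the pairs $(a,-a)$ with $a\in S$ and $2a = g$; so the number of such representations is congruent mod $2$ to $|\{a\in S : 2a = g\}|$. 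Choosing $g$ odd (possible since $v$ is even and $v\ge4$) forces $\lambda$ even, and then choosing $g$ nonzero and even forces $x\in S \Leftrightarrow x+v/2\in S$ for every $x\notin N$. Hence $S\setminus N$ is a union of cosets of $N$, the image $\bar S$ of $S$ in $\mathbb{Z}_{v/2}$ is reversible, and comparing difference counts in $\mathbb{Z}_v$ with those in $\mathbb{Z}_{v/2}$ exhibits $\bar S$ as a difference set there, hence---by Theorem~\ref{rdsss}---a reversible sum set in a cyclic group of strictly smaller order. Since the order strictly decreases, the descent must terminate at an odd-order cyclic group, where Theorem~\ref{oddabelian} forces the set to be trivial; tracing the descent back then contradicts the nontriviality of $S$. (Alternatively, one may simply quote the classical fact that no nontrivial cyclic difference set is reversible.)

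The step I expect to be the main obstacle is the bookkeeping forced by the two elements of $G$ of order at most $2$, namely $0$ and $v/2$. When one or both lie in $S$, the image $\bar S$ is not a clean half-size copy of $S$, so the passage from difference counts in $\mathbb{Z}_v$ to difference counts in $\mathbb{Z}_{v/2}$ requires a case analysis---further subdivided according to $v\bmod 4$, since whether $v/2$ lies in $2\mathbb{Z}_v$ (equivalently, is a square in $\mathbb{Z}_v$) depends on this. Confirming the base case of the descent---that $\bar S$ trivial, i.e.\ $|S|\le2$, forces $S = \{x,x+v/2\}$, which one checks is not a sum set for $v\ge4$---is routine; it is getting cleanly to that base case that needs care. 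Invoking the multiplier-theoretic nonexistence result sidesteps this entirely, at the cost of leaving the paper's self-contained style.
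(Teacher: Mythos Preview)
Your first paragraph is correct and is exactly the paper's argument: Theorem~\ref{abelianreversible} plus Theorem~\ref{rdsss} gives that every abelian sum set is a reversible difference set.

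For the cyclic statement, the paper does not attempt anything elementary; it simply invokes the McFarland--Ma theorem that no nontrivial cyclic difference set has $-1$ as a multiplier. Your parenthetical alternative is thus precisely the published proof. Your descent sketch, on the other hand, has a genuine gap that is more than bookkeeping. The step ``comparing difference counts \dots\ exhibits $\bar S$ as a difference set'' fails in exactly the case you cannot exclude, namely $|S\cap N|=1$. Write $S=\{0\}\cup T$ with $T$ a union of nontrivial $N$-cosets; a direct count shows that for $\bar g\neq 0$ the number of representations of $\bar g$ as a difference in $\bar S$ equals $\lambda/2+[\bar g\in\bar T]$, so $\bar S$ is a \emph{partial} difference set, not a difference set, and the inductive hypothesis does not apply to it. (Curiously, the approach can be rescued without descent: summing these representation counts and comparing with $|\bar S|(|\bar S|-1)$ forces $(k-1)^2=(v-2)\lambda$, which together with $k(k-1)=(v-1)\lambda$ yields $k=v-1$, i.e.\ $S$ trivial; the cases $|S\cap N|\in\{0,2\}$ similarly collapse to $k=\lambda$. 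But this is a different argument from the one you outlined.) If you want to stay within the paper's self-contained style you would need to carry out that direct parameter analysis; otherwise, cite McFarland--Ma as the paper does.
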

\begin{proof}
A reversible sum set is necessarily a difference set, by Theorem \ref{rdsss}, so the first result is obvious.
The second result, first obtained by Lam \cite{lam751}, now follows immediately as there are no reversible cyclic difference sets \cite{mcfarlandma}.
\end{proof}

\section{General Constructions}\label{construction1}

In Section \ref{theory} we saw how the existence of normal subgroups of small index can affect the possible structure of sum sets.
Here we look at the opposite end of the spectrum -- normal subgroups of order 2.
While normal subgroups of small index proved useful in developing nonexistence results for sum sets, normal subgroups of small order allow for simple, generic construction techniques for sum sets.

If $N = \{1,z\}$ is a normal subgroup of $G$, then $z \in Z(G)$.
Clearly any sum set $S \subset G$ meets each coset of $N$ in either 0, 1, or 2 elements.
We highlight two possible situations which will be of importance to us.
\begin{definition}\label{typedef}
Let $N \lhd G$, $o(N) = 2$, and let $S \subset G$ be a sum set.
We say $S$ is {\bf type 1 with respect to $N$} if it does not intersect $N$ but intersects each other coset of $N$ in either 0 or 1 element.
We say $S$ is {\bf type 2 with respect to $N$} if it intersects $N$ in one element and intersects each other coset of $N$ in either 0 or 2 elements.
\end{definition}
\noindent
The motivation for these definitions will become apparent as we develop techniques for constructing sum sets.

If $S$ is a sum set, then the translate $Sg = \{ sg : s \in S \}$ is generally not a sum set.
However, if $z$ is a central involution, then we can write $a = xy$ as a product in $S$ if and only if we can write $a = (xz)(yz)$ in $Sz$.
Thus, if $z$ is a central involution, then $Sz$ is a sum set with the same parameters as $S$.
We can say slightly more in the case where $S$ is either type 1 or type 2 with respect to the normal subgroup $N = \{1,z\}$.
\begin{lemma}\label{typelemma}
Let $S$ be a sum set in $G$ and suppose $G$ possesses a normal subgroup $N = \{1,z\}$.
If $S$ is type 1 with respect to $N$, then $Sz$ is also a type 1 sum set, necessarily disjoint from $S$.
If $S$ is type 2 with respect to $N$, then $Sz$ is also a type 2 sum set, and $Sz$ differs from $S$ in precisely one element.
\end{lemma}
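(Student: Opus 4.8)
The plan is to leverage the observation made in the paragraph preceding the lemma: since $z$ is a central involution, $Sz$ is automatically a sum set with the same parameters as $S$. So the only thing left to verify is the statement about how $Sz$ meets the cosets of $N$ and how it overlaps $S$. The single tool that does all the work is that right multiplication by $z$ fixes every coset of $N$, because $z\in N$ forces $gNz=gN$ for all $g\in G$; hence $sz\in gN$ if and only if $s\in gN$, and therefore $|Sz\cap gN|=|S\cap gN|$ for every coset $gN$. In words, $S$ and $Sz$ intersect each coset of $N$ in the same number of elements.

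For the type~1 case, $1,z\notin S$ gives $S\cap N=\varnothing$, and the identity $|Sz\cap gN|=|S\cap gN|$ then yields $Sz\cap N=\varnothing$ and $|Sz\cap gN|\le 1$ for every other coset, so $Sz$ is again type~1. For disjointness, suppose $s,s'\in S$ with $sz=s'$; then $s'\in sN$ and $s'\ne s$ since $z\ne 1$, so $|S\cap sN|\ge 2$, contradicting type~1. Hence $S\cap Sz=\varnothing$.

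For the type~2 case, the same count $|Sz\cap gN|=|S\cap gN|$ shows that $Sz$ meets $N$ in exactly one element and every other coset in $0$ or $2$ elements, so $Sz$ is type~2. To locate the overlap precisely I would argue coset by coset. On a coset $gN\ne N$ with $S\cap gN=\{g,gz\}$ one has $Sz\cap gN=\{gz,gz^2\}=\{g,gz\}$, so $S$ and $Sz$ agree there; on a coset disjoint from $S$ they agree trivially; and on $N$ itself, writing $S\cap N=\{e\}$ with $e\in\{1,z\}$, we get $Sz\cap N=\{ez\}$ with $ez\ne e$, so $Sz=(S\setminus\{e\})\cup\{ez\}$. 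Therefore $S$ and $Sz$ differ in exactly one element.

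There is no genuine obstacle here; the argument is a routine case analysis over the cosets of $N$. The only point needing a little care is the bookkeeping in the type~2 case, namely confirming that the unique element of $S$ lying in $N$ is precisely the element that gets swapped out when passing to $Sz$.
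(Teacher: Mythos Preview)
Your proof is correct and follows essentially the same approach as the paper: both use that right multiplication by $z$ fixes every coset of $N$, then do a case analysis over cosets to verify the type conditions and the overlap claims. Your write-up is slightly more explicit in the bookkeeping (e.g., the disjointness argument in the type~1 case and the coset-by-coset check in the type~2 case), but the underlying argument is the same.
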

\begin{proof}
That $Sz$ is a sum set is clear from the preceeding discussion.
Multiplying the elements of $G$ by $z$ fixes all cosets of $N$.
Thus $S$ and $Sz$ intersect the same cosets of $N$.
If $S$ is type 1, $S$ and $Sz$ intersect those cosets in distinct elements, by definition.
If $S$ is type 2, then $Sz$ contains the same nontrivial cosets of $N$ as $S$, while if $S$ contains 1, then $Sz$ contains $z$.
If $S$ contains $z$, then $Sz$ contains 1.
\end{proof}

We now present a technique for constructing type 2 sum sets.
Essentially, the process involves ``lifting'' a partial sum set $P$ in the group $K$ to a partial sum set in the group $G = K \times \{1,z\}$, which can then be completed to a sum set by adjoining either 1 or $z$.

\begin{theorem}\label{2lift}
Let $P$ be a $(v,k,\beta - 1, \beta)$ partial sum set in $K$ not containing 1.
Then $S = P \cup Pz$ is a $(2v,2k,2\beta - 2, 2\beta)$ partial sum set in $G = K \times \{1,z\}$ if and only if $|P \cap P^{(-1)}| = \beta$.
Consequently, when $|P \cap P^{(-1)}| = \beta$, $S \cup \{1\}$ and $S \cup \{z\}$ are both $(2v, 2k+1, 2\beta)$ type 2 sum sets in $G$ with respect to $N = \{1,z\}$.
\end{theorem}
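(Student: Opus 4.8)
The plan is to carry out the entire argument in the integral group rings $\mathbb{Z}K$ and $\mathbb{Z}G$, using the fact that the coefficient of $g$ in the group ring square of a subset counts the number of ways to write $g$ as a product of two of its elements. Set $m = |P \cap P^{(-1)}| - \beta$. Since $1 \notin P$, the hypothesis that $P$ is a $(v,k,\beta-1,\beta)$ partial sum set translates to the single identity $P^2 = \beta K - P + m$ in $\mathbb{Z}K$. Identifying $\mathbb{Z}G$ with formal sums $X + Yz$ where $X,Y \in \mathbb{Z}K$ (using $z^2 = 1$ and $z$ central), we have $S = P + Pz = P(1+z)$, and a routine expansion gives $S^2 = 2P^2(1+z)$. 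Substituting the identity for $P^2$ and using $K(1+z) = G$ together with $P(1+z) = S$, I obtain the key relation
\[ S^2 = 2\beta G - 2S + 2m(1+z). \]

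The next step is to read off the equivalence. Since $1 \notin S$ and $|S| = 2k$ inside the group $G$ of order $2v$, the set $S$ is a $(2v, 2k, 2\beta-2, 2\beta)$ partial sum set precisely when $S^2 = 2\beta G - 2S + (|S \cap S^{(-1)}| - 2\beta)$. Comparing this with the key relation, the condition becomes $2m(1+z) = (|S \cap S^{(-1)}| - 2\beta)\cdot 1$ in $\mathbb{Z}G$. A direct count shows $|S \cap S^{(-1)}| = 2|P \cap P^{(-1)}| = 2\beta + 2m$ (an element $p$ or $pz$ lies in $S \cap S^{(-1)}$ exactly when $p \in P \cap P^{(-1)}$), so the right-hand side equals $2m\cdot 1$, and the condition collapses to $2mz = 0$, i.e. $m = 0$, i.e. $|P \cap P^{(-1)}| = \beta$. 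This settles the ``if and only if'' in both directions.

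For the final statement, assume $|P \cap P^{(-1)}| = \beta$, so that $m = 0$ and $S^2 = 2\beta G - 2S$. Observing that $Sz = P(1+z)z = P(1+z) = S$, I compute $(S + 1)^2 = S^2 + 2S + 1 = 2\beta G + 1$ and, likewise, $(S + z)^2 = S^2 + 2Sz + 1 = 2\beta G + 1$. In each case the group ring square is $2\beta G + 1$, which has coefficient $2\beta$ at every nonidentity element of $G$; since $S \cup \{1\}$ and $S \cup \{z\}$ each have cardinality $2k + 1$, each is a $(2v, 2k+1, 2\beta)$ sum set. Type 2 with respect to $N = \{1,z\}$ then follows from a direct inspection of cosets: since $z \notin S$, both sets meet $N$ in exactly one element ($1$ and $z$ respectively), while for $p \in K \setminus \{1\}$ the coset $\{p, pz\}$ meets $S \cup \{1\}$ (or $S \cup \{z\}$) in two elements if $p \in P$ and in none otherwise.

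I do not expect a serious obstacle; the content is careful bookkeeping in $\mathbb{Z}G$. The one point that needs attention is not conflating the term $2m(1+z)$ with a pure integer constant: the whole equivalence turns on the observation that $2m(1+z)$ contributes $2m$ to the coefficient of the group element $z$, so the two expressions for $S^2$ must be matched coefficient-by-coefficient, not merely by their ``constant parts.'' I would also flag at the start that $1 \notin S$ and $|S| = 2k$, since both are used implicitly when invoking the group ring form of the partial sum set condition.
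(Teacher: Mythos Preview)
Your argument is correct. It differs from the paper's proof in method: the paper works element by element, splitting an arbitrary $a\in G\setminus N$ into the cases $a\in K$ and $a=bz$ with $b\in K$, showing in each case that $|\C_{a,S}|=2|\C_{b,P}|$, and then treating $z$ separately to obtain the count $2|P\cap P^{(-1)}|$. You instead pass to $\mathbb{Z}G$, encode the hypothesis as $P^2=\beta K-P+m$, expand $S^2=2P^2(1+z)$, and read off the single identity $S^2=2\beta G-2S+2m(1+z)$; the equivalence then drops out by comparing the coefficient at $z$. Your route is shorter and makes the sum set conclusion for $S\cup\{1\}$ and $S\cup\{z\}$ a one-line computation via $Sz=S$, whereas the paper's argument (``adjoining $1$ increases each count in $S$ by $2$'') is more explicitly combinatorial. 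Both are fully rigorous; the group ring approach buys compactness, while the paper's case analysis makes the pairing of representations $(x,y)\leftrightarrow(xz,yz)$ visible without any algebraic machinery.
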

\begin{proof}
It is clear that $|S| = 2k$, so we need only prove that $S$ exhibits the claimed additive regularity.

Consider any element $a \in G \setminus N$.
Either $a \in K$ or $a = bz$ for some $b \in K$.
We prove the number of ways to write $a$ as a product in $S$ is twice the number of ways to write $a$ as a product in $P$.

First suppose $a \in K$ and let $a = xy$ be a representation for $a$ as a product in $P$.
Then $a = xy$ is also a representation for $a$ as a product in $S$ as $P \subset S$.
In addition, $a = (xz)(yz)$ is a representation for $a$ as a product in $S$.
Conversely, whenever we can write $a = xy$ as a product in $S$ it must be the case that both $x$ and $y$ are in $P$ or both are in $Pz$.
Hence, 
\[
|\C_{a,S}| = 2|\C_{a,P}| =
\begin{cases}
2\beta - 2 &\text{if $a \in S$,}\\
2\beta &\text{if $a \notin S$.}
\end{cases}
\]

Now suppose $a = bz$ for some $b \in K$.  
Whenever $b = xy$ is a representation for $b$ as a product in $P$, $a = (xz)y$ and $a = x(yz)$ are representations for $b$ as a product in $S$.
Conversely, whenever we can write $b = xy$ as a product in $S$, we must have precisely one of $x$ or $y$ in $P$ and the other in $Pz$.
Hence, 
\[
|\C_{a,S}| = 2|\C_{b,P}| =
\begin{cases}
2\beta - 2 &\text{if $a \in S$,}\\
2\beta &\text{if $a \notin S$.}
\end{cases}
\]

Thus, with the exception of $z$, all nonidentity elements of $G$ are represented as products in $S$ in the number of ways claimed.  
Note $z \notin S$ so we require $z$ to be represented as a product in $S$ in precisely $2\beta$ ways.
Suppose $z = xy$ where $x,y \in S$.
If $x \in P$, then $y = x^{-1}z \in Pz$, so necessarily $x^{-1} \in P$.
Hence the number of ways to write $z = xy$ where $x,y \in S$ and $x \in P$ is precisely $|P \cap P^{(-1)}|$.
Similarly if $x \notin P$, then we must have $y \in P$ and $x = y^{-1}z$.
Again, the number of ways to write $z = xy$ in this situation is $|P \cap P^{(-1)}|$.
Thus $z$ can be written as a product in $S$ in precisely $2|P \cap P^{(-1)}|$ ways, so $S$ is a $(2v,2k,2\beta - 2, 2\beta)$ partial sum set if and only if $|P \cap P^{(-1)}| = \beta$.

Finally if $S$ is a $(2v,2k,2\beta - 2, 2\beta)$ partial sum set, then adjoining 1 to $S$ increases the number of ways to write each element of $S$ as a product in $S$ by 2 while not affecting the number of ways to write elements not in $S$ as products in $S$.
Hence $S \cup \{1\}$ is a $(2v,2k+1,2\beta)$ sum set which is clearly type 2 with respect to $N$.
By Lemma \ref{typelemma}, $S \cup \{z\} = (S \cup \{1\})z$ is also a type 2 $(2v,2k+1,2\beta)$ sum set.
\end{proof}

This lifting process can be reversed in the sense that if $S$ is a type 2 sum set in $G$ with respect to the normal subgroup $N$, then there exists a partial sum set $P$ in the group $G/N$.
Hence there is a 2-to-1 correspondence between type 2 sum sets in groups of order $2v$ and partial sum sets with particular parameters in groups of order $v$.

\begin{theorem}\label{2project}
Let $S$ be a $(2v,2k+1,2\beta)$ sum set in $G$, type 2 with respect to the normal subgroup $N = \{1,z\}$.
Then $P = (S \setminus N)/N$ is a $(v,k,\beta-1,\beta)$ partial sum set in $G/N$.
\end{theorem}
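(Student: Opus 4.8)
The plan is to reverse the lifting construction of Theorem~\ref{2lift}, translating the additive count in $G$ into one in $H := G/N$ by means of the quotient map $\pi\colon G\to H$. Throughout write $z$ for the nontrivial element of $N$, which is a central involution of $G$.

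First I would normalise so that $1\in S$. By Lemma~\ref{typelemma}, together with the observation recorded just before it that multiplication by the central involution $z$ preserves the sum set property and its parameters, the translate $Sz$ is again a type~2 $(2v,2k+1,2\beta)$ sum set; and since $z\in N$, every coset of $N$ is setwise fixed by multiplication by $z$, so $Sz\setminus N=(S\setminus N)z=S\setminus N$ and $Sz$ determines the same set $P$. As exactly one of $S$, $Sz$ contains $1$, we may assume $1\in S$, and we write $S=P'\cup\{1\}$ with $P'=S\setminus N$. Two structural facts now follow from $S$ being type~2: $P'$ is a union of complete cosets of $N$ (each nontrivial coset has two elements and $S$ meets it in $0$ or $2$), so $|P'|=2k$, hence $|P|=k$; moreover for $g\notin N$ we have $g\in S\iff gN\subseteq S\iff gN\in P$, and $\pi$ restricted to $P'$ is exactly two-to-one onto $P$.

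The crux is a local count. Fix $\bar a\in H$ with $\bar a\neq 1$ and choose a representative $a\in G$; then $a\notin N$, so in particular $a\neq 1$. Since $S$ is a $(2v,2k+1,2\beta)$ sum set, $a$ has exactly $2\beta$ representations $a=xy$ with $x,y\in S$. Those using the element $1$ are $a=1\cdot a$ and $a=a\cdot 1$ (distinct, as $a\neq 1$), which occur precisely when $a\in S$, i.e.\ when $\bar a\in P$; they contribute $2$ if $\bar a\in P$ and $0$ otherwise. Every remaining representation has $x,y\in P'$. For these the map $(x,y)\mapsto(\pi(x),\pi(y))$ carries $\{(x,y)\in P'\times P':xy=a\}$ onto $\{(\bar x,\bar y)\in P\times P:\bar x\bar y=\bar a\}$, and each pair in the target has exactly two preimages: the two lifts of $\bar x$ both lie in $P'$, and for each such $x$ the unique $y=x^{-1}a$ satisfies $\pi(y)=\bar x^{-1}\bar a=\bar y\in P$, hence $y\in P'$. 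Thus the number of these representations is $2|\C_{\bar a,P}|$, and we obtain $2\beta=2|\C_{\bar a,P}|+2$ when $\bar a\in P$ and $2\beta=2|\C_{\bar a,P}|$ when $\bar a\notin P$; that is, $|\C_{\bar a,P}|=\beta-1$ for every nonidentity $\bar a\in P$ and $|\C_{\bar a,P}|=\beta$ for every nonidentity $\bar a\notin P$. Combined with $|P|=k$, this says exactly that $P$ is a $(v,k,\beta-1,\beta)$ partial sum set in $H$.

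I expect the only delicate point to be the two-to-one bookkeeping in the last step: one must verify that every representation of $a$ in $G$ is counted exactly once and that the special element $1$ cannot coincide with a ``lifted'' representation, which is precisely why the hypothesis $\bar a\neq 1$ (forcing $a\notin N$) is used. The normalisation to $1\in S$ via Lemma~\ref{typelemma} is what lets us avoid carrying out an essentially identical separate computation in the case $z\in S$.
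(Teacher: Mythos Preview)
Your proof is correct and follows essentially the same strategy as the paper's: both arguments remove the single element of $S\cap N$ to obtain a $(2v,2k,2\beta-2,2\beta)$ partial sum set $S\setminus N$ in $G$, and then exploit the two-to-one correspondence $x\leftrightarrow xz$ under the quotient map to halve the counts. Your version is more explicit---the normalisation to $1\in S$ via Lemma~\ref{typelemma} and the careful fibre analysis of $(x,y)\mapsto(\pi(x),\pi(y))$ spell out details the paper's short proof leaves to the reader---but the underlying idea is the same.
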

\begin{proof}
If $S$ is type 2, then $S$ intersects $N$ in one element, and $S \setminus N$ is a $(2v,2k,2\beta - 2, 2\beta)$ partial sum set.
For any $a \in G \setminus N$, if $x \in \C_{a,S}$ then $xz \in \C_{a,S}$ as well.
Under the canonical homomorphism $g \mapsto gN$, the elements $x$ and $xz$ are both mapped to $xN$.
Hence, $|\C_{aN,P}| = \frac{1}{2}|\C_{a,S \setminus N}|$.
The result follows. 
\end{proof}

\section{Dihedral Constructions}\label{construction2}

The preceding constructions are of a general nature, and to apply them one must already be in possession of some additively regular set.
Dihedral groups provide a wealth of additively regular sets and an easily exploitable group structure for constructing them.
Throughout this section, we use the presentation
\[ D_n = \langle x, t ~|~ x^n = t^2 = 1, txt = x^{-1} \rangle \]
to denote the dihedral group of order $2n$.

We present two constructions for sum sets in $D_n$ with parameters $(2n,n-1,\frac{n-2}{2})$.
These parameters imply $n$ is even, so that $Z(D_n)$ has order 2.
The first construction yields type 1 sum sets with respect to $Z(D_n)$ while the sum sets of the second construction are type 2 with respect to $Z(D_n)$.

\begin{theorem}\label{t1construction}
Let $C_n = \langle x \rangle$ be the cyclic group of order $n$ where $n \ge 4$ is even.
Let $M$ be a maximally skew set in $C_n$ containing precisely one element from each coset of $\langle x^{\frac{n}{2}} \rangle$.
Set $S = M \cup Mt \subset D_n$.
Then $S \cup \{t\}$ and $S \cup \{x^{\frac{n}{2}}t\}$ are both $(2n, n-1, \frac{n-2}{2})$ type 1 sum sets with respect to $Z(D_n)$.
\end{theorem}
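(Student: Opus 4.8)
The plan is to pass to the integral group ring $\mathbb{Z}D_n$ and prove a single multiplicative identity that handles both claimed sum sets at once. Writing $w$ for either $1$ or $x^{n/2}$, both sets in question have the form $T = S \cup \{wt\}$; since $M$ avoids $1$ and $x^{n/2}$ we have $wt \notin S$, so $|T| = 2|M| + 1 = n-1$, the asserted value of $k$. Identify $M$, $M \cup \{w\}$, $C_n$, and $D_n$ with the group-ring elements $R = \sum_{x^i \in M} x^i$, $R' = R + w$, $C = \sum_{i=0}^{n-1} x^i$, and $G = C + Ct$; then $S = R + Rt$ and $T = R + R't$. The central step is to establish the identity $T^2 = \frac{n-2}{2}\, G + 1$ in $\mathbb{Z}D_n$. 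Reading off the coefficient of an arbitrary $g \ne 1$ then gives $|\C_{g,T}| = \frac{n-2}{2}$, so $T$ is a $(2n, n-1, \frac{n-2}{2})$ sum set, the residual $+1$ being exactly the value $k^2 - \mu v = (n-1)^2 - (n-2)n = 1$ forced by Lemma \ref{powerslemma}.

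Before the ring computation I would record two structural facts about $M$. First, a maximal skew set in the even-order cyclic group $C_n$ must omit the two self-inverse elements $1$ and $x^{n/2}$ and contain exactly one member of each remaining pair $\{x^i, x^{-i}\}$; hence $M$ and $M^{(-1)}$ are disjoint and $M \cup M^{(-1)} = C_n \setminus \{1, x^{n/2}\}$, i.e.\ $R + R^{(-1)} = C - 1 - x^{n/2}$ in $\mathbb{Z}C_n$. Second, the coset hypothesis forces $M \cup \{w\}$ to meet each coset of $\langle x^{n/2} \rangle$ in exactly one element, so $M \cup \{w\}$ and $(M \cup \{w\})x^{n/2}$ partition $C_n$, i.e.\ $R' + R'x^{n/2} = C$. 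I would also note the trivialities $R'C = |M \cup \{w\}|\, C = \frac{n}{2}C$ and $w^2 = 1$.

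Then I would expand $T^2 = (R + R't)^2$ using the relations $tx^i = x^{-i}t$ and $t^2 = 1$; since $\mathbb{Z}C_n$ is commutative this collapses to $T^2 = \bigl(R^2 + R'(R')^{(-1)}\bigr) + \bigl(RR' + R'R^{(-1)}\bigr)t$. Expanding $(R')^{(-1)} = R^{(-1)} + w$ and using $w^2 = 1$ turns the rotation part into $R'(R + R^{(-1)}) + 1$ and the reflection part into $R'(R + R^{(-1)})$; and by the two facts above, $R'(R + R^{(-1)}) = R'(C - 1 - x^{n/2}) = R'C - R' - R'x^{n/2} = \frac{n}{2}C - C = \frac{n-2}{2}C$. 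Substituting back gives $T^2 = \bigl(\frac{n-2}{2}C + 1\bigr) + \frac{n-2}{2}Ct = \frac{n-2}{2}G + 1$, as needed. The type 1 claim is then routine: $T$ misses $Z(D_n) = \langle x^{n/2} \rangle$ (neither $1$ nor $x^{n/2}$ lies in $M$, and $wt$ is a reflection), meets each rotation coset $\{x^i, x^{i+n/2}\}$ in at most one point because $M$ does, and meets each reflection coset in at most one point because $Mt$ does and the lone extra element $wt$ lies in the reflection coset $\{t, x^{n/2}t\}$, which $Mt$ avoids.

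I expect the only real obstacle to be the bookkeeping in the twisted ring: keeping the rule $tR = R^{(-1)}t$ straight, correctly tracking how the adjoined element $w$ enters $R'$, and pinning down precisely the two combinatorial identities $R + R^{(-1)} = C - 1 - x^{n/2}$ and $R' + R'x^{n/2} = C$. Everything after that is formal. It is worth emphasizing that the two choices of $w$ generally give sum sets that are not translates of one another (since $Mx^{n/2} \ne M$ in general), so both genuinely require proof; but the computation above is uniform in $w$, so one pass settles both.
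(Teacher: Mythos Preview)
Your proof is correct and takes a genuinely different route from the paper's. The paper argues element-by-element: for each $y \in C_n$ it sorts the products $m^{-1}y$ into four cases to compute $|\C_{y,M}| + |\A_{y,M}|$, then observes that in $D_n$ the number of representations of $y$ or $yt$ as a product in $S = M \cup Mt$ equals exactly this sum, and finally adjoins $t$ (resp.\ $zt$) and tracks which elements pick up an extra representation. Your approach instead packages the two structural facts about $M$ as the group-ring identities $R + R^{(-1)} = C - 1 - x^{n/2}$ and $R' + R'x^{n/2} = C$, and lets a single expansion of $T^2$ do all the counting at once. What you gain is uniformity and brevity: one computation covers both choices of $w$ simultaneously (the paper handles $t$ explicitly and waves at $zt$ with ``a similar argument''), and the residual constant $+1$ falls out automatically rather than being verified by a separate case analysis of the identity. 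What the paper's approach gains is that it stays entirely within the elementary special-subsets framework of Section~\ref{specsubsets}, avoiding the group-ring machinery introduced only later in Section~\ref{abeliansumsets}; this is consistent with the authors' stated preference for ``as little heavy machinery as possible.'' Both arguments ultimately rest on the same two combinatorial properties of $M$, so the difference is one of packaging rather than of underlying content.
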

\begin{proof}
Set $z = x^{\frac{n}{2}}$.
Let $M$ be a maximally skew set in $C_n$ containing precisely one element from each coset of $\langle z \rangle$.
As $n$ is even, $z$ is the unique involution in $C_n$.
The remaining $n-2$ nonidentity elements each have inverses distinct from themselves, so $|M| = \frac{n-2}{2}$.

Fix some $y \in C_n$.
For each $m \in M$, consider the element $m^{-1}y$.
There are four possibilities:
\begin{enumerate}
 \item[1.] $m^{-1}y = 1 \mbox{~~~~~} ( \Leftrightarrow m = y)$,
 \item[2.] $m^{-1}y = z \mbox{~~~~~} ( \Leftrightarrow m = yz)$,
 \item[3.] $m^{-1}y \in M$,
 \item[4.] $m^{-1}y \in M^{(-1)}$.
\end{enumerate}
\noindent
If $y=1$ or $y = z$, the first two cases are impossible.
Hence every $m \in M$ is in either $\C_{1,M}$ or $\A_{1,M}$, and in either $\C_{z,M}$ or $\A_{z,M}$.
As $M$ is skew, $\C_{a,M} \cap \A_{a,M} = \emptyset$ for any $a \in C_n$.
We therefore have
\begin{equation}\label{t11} 
|\C_{1,M}| + |\A_{1,M}| = |\C_{z,M}| + |\A_{z,M}| = |M| = \frac{n-2}{2}.
\end{equation}

If $y \notin \{1,z\}$, then either $y$ or $y^{-1}$ is in $M$.
If $y \in M$, then $yz \notin M$ by construction, so case 2 does not occur.
Similarly, if $y \notin M$, then $yz \in M$ so case 1 does not occur.
In either situation we have
\begin{equation}\label{t12}
|\C_{y,M}| + |\A_{y,M}| = |M| - 1 = \frac{n-4}{2}.
\end{equation}

Now we move from the group $C_n$ to the group $D_n = C_n \cup C_nt$.
Form the set $S = M \cup Mt$, and form all possible products $ab$ in $S$.
There are again four distinct situations:
\begin{enumerate}
 \item[1.] $a \in M$, $b \in M$,
 \item[2.] $a \in Mt$, $b \in Mt$,
 \item[3.] $a \in Mt$, $b \in M$,
 \item[4.] $a \in M$, $b \in Mt$.
\end{enumerate}
Note in the first two cases, the product $ab$ is in $C_n$, while in the latter two cases $ab \in C_nt$.

Multiplication in $D_n$ obeys the rule $ty = y^{-1}t$, for any $y \in C_n$.
So, for example, $(m_1t)m_2 = m_1(tm_2) = (m_1m_2^{-1})t$, while $m_1(m_2t) = (m_1m_2)t$.
Hence, the number of ways to write $yt$ as a product in $S$ is the sum of the number of ways to write $y$ as a product in $M$ and the number of ways to write $y$ as a quotient in $M$.
It follows from Equations \ref{t11} and \ref{t12} that
\[ |\C_{a,S}| =
\begin{cases}
\frac{n-2}{2} &\text{if $a \in \{1,t,z,zt\}$,}\\
\frac{n-4}{2} &\text{if $a \notin \{1,t,z,zt\}$.}
\end{cases}
\]

If we adjoin the element $t$ to $S$, what new products can be generated?
The set $S$ contains none of the elements $\{1,t,z,zt\}$, so none of $\{t,z,zt\}$ gains any additional representations as products.
The identity gains one additional representation, since $t^2 = 1$.
If $a \in D_n$ is any element other than these four, then precisely one of $at$ or $a^{-1}t$ is in $S$.
If $at \in S$, then $(at)t = a$ is a new way to write $a$ as a product in $S$.
On the other hand, if $a^{-1}t \in S$ then $t(a^{-1}t) = a$ is a new way to write $a$ as a product in $S$.
Hence,
\[ |\C_{a,S \cup \{t\}}| =  
\begin{cases}
\frac{n}{2} &\text{if $a=1$,}\\
\frac{n-2}{2} &\text{if $a \ne 1$.}
\end{cases}
\]
\noindent
Thus, $S \cup \{t\}$ is a $(2n, n-1, \frac{n-2}{2})$ sum set in $D_n$.
A similar argument shows adjoining $zt$ to $S$ produces a sum set with the same parameters.
The construction guarantees that the resulting sum set misses the center of $D_n$ while containing precisely one element from each nontrivial coset of $\{1,z\} = Z(D_n)$.
Thus, these sum sets are type 1 with respect to $Z(D_n)$.
\end{proof}

While type 1 dihedral sum sets with parameters $(2n, n-1, \frac{n-2}{2})$ can be constructed in $D_n$ for all even $n \ge 4$, there is an additional restriction for type 2 dihedral sum sets with these parameters:

\begin{lemma}\label{t2lemma}
A type 2 sum set with parameters $(2n, n-1, \frac{n-2}{2})$ may exist in $D_n$ only if $n \equiv 2 \bmod{4}$.
\end{lemma}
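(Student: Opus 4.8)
The plan is to reduce the lemma to a single divisibility fact: for \emph{any} type 2 sum set $S \subset D_n$ with respect to $N = Z(D_n) = \{1,z\}$ (where $z = x^{n/2}$), the parameter $\mu$ must be even. Granting this, the parameters $(2n, n-1, \frac{n-2}{2})$ give $\mu = \frac{n-2}{2}$, so $\frac{n-2}{2}$ is even, i.e.\ $4 \mid n-2$, i.e.\ $n \equiv 2 \bmod 4$, as claimed.

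To prove $\mu$ is even I would argue as follows. By Lemma \ref{typelemma}, $Sz$ is again a type 2 sum set with the same parameters, and exactly one of $S$, $Sz$ contains the identity (if $S \cap N = \{z\}$ then $Sz \cap N = \{1\}$), so we may assume $1 \in S$. Since a type 2 set meets $N$ in just one element, this forces $z \notin S$. Hence every element of $S \setminus N = S \setminus \{1\}$ lies outside $N$, and, $S$ being type 2, for each such element $g$ the whole coset $gN = \{g, gz\}$ lies in $S$; in particular $gz \in S \setminus \{1\}$ as well. Now choose a nonidentity element $a \notin N \cup S$ — one exists because $|(G \setminus N) \setminus S| = (2n-2) - (n-2) = n > 0$. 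Because $a \notin S$, no representation $a = xy$ with $x,y \in S$ can have $x = 1$ or $y = 1$; hence the $\mu = |\C_{a,S}|$ such representations all have $x, y \in S \setminus \{1\}$, and then $x, y \notin N$. On this set of $\mu$ ordered pairs consider the map $(x,y) \mapsto (xz, yz)$: since $z$ is central and $z^2 = 1$ we get $(xz)(yz) = xy = a$; by the previous sentence $xz, yz \in S \setminus \{1\}$, so the map is well defined and is an involution; and it is fixed-point-free because $xz \ne x$. A finite set admitting a fixed-point-free involution has even cardinality, so $\mu$ is even.

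The argument is short; the step needing care is the last one, where one must check that the pairing $(x,y)\mapsto(xz,yz)$ never leaves $S \setminus \{1\}$ — this is exactly where the type 2 hypothesis enters, via the fact that $S$ meets each coset $gN$ ($g \notin N$) in either $\varnothing$ or all of $\{g,gz\}$ — and that the chosen $a$ admits no representation through the identity. I do not anticipate a genuine obstacle. One can repackage the same computation inside $\mathbb{Z}G$: writing $S = \tilde{P}(1+z) + e$ with $e \in \{1,z\}$ and $\tilde{P}$ a transversal of $N$ meeting $S$ in its nontrivial cosets, and using $(1+z)^2 = 2(1+z)$, one gets $S^2 = 2[\tilde{P}^2 + e\tilde{P}](1+z) + 1$, whose coefficient at any $g \notin N$ is even; comparison with $S^2 = \mu G + n$ again forces $\mu$ even. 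Alternatively, once $\mu$ is known to be even, Theorem \ref{2project} exhibits $(S\setminus N)/N$ as a $(n, \frac{n-2}{2}, \frac{n-6}{4}, \frac{n-2}{4})$ partial sum set in $D_n/N \cong D_{n/2}$, and integrality of its parameters recovers the same congruence.
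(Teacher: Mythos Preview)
Your proof is correct, and it takes a genuinely different route from the paper. The paper simply invokes Theorem~\ref{2project}: writing $n = 2m$, a type~2 $(4m, 2m-1, m-1)$ sum set in $D_{2m}$ projects to a $(2m, m-1, \frac{m-3}{2}, \frac{m-1}{2})$ partial sum set in $D_{2m}/Z(D_{2m})$, and integrality of $\frac{m-1}{2}$ forces $m$ odd. Your argument instead proves directly that $\mu$ is even for \emph{any} type~2 sum set with respect to a central $N = \{1,z\}$, via the fixed-point-free involution $(x,y) \mapsto (xz,yz)$ on the representations of a suitably chosen $a \notin S \cup N$. This is more elementary (it avoids the projection theorem) and strictly more general --- it is a structural fact about type~2 sum sets, not tied to dihedral groups or to the specific parameters $(2n,n-1,\tfrac{n-2}{2})$. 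The group-ring repackaging you sketch is equally valid, and your third alternative is essentially the paper's own proof. Either approach is short; yours isolates a reusable lemma, while the paper's keeps the logical dependence on Theorem~\ref{2project} explicit, which fits the narrative there since the next theorem constructs the required partial sum sets in $D_m$ and lifts them back up.
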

\begin{proof}
If there exists a $(2n, n-1, \frac{n-2}{2})$ sum set in $D_n$, then $n$ is necessarily even, so we may rewrite these parameters as $(4m, 2m-1, m-1)$.
By Theorem \ref{2project}, if there exists a type 2 dihedral sum set with parameters $(4m, 2m-1, m-1)$, then there must exist a $(2m, m-1, \frac{m-3}{2}, \frac{m-1}{2})$ partial sum set in $D_{2m} / Z(D_{2m})$.
Consequently type 2 sum sets with these parameters may exist in $D_{2m}$ only if $m$ is odd, i.e. only if $n \equiv 2 \bmod{4}$.
\end{proof}

In the case where $m$ is odd, $D_{2m} \cong D_m \times Z(D_{2m})$.
Hence $D_{2m} / Z(D_{2m}) \cong D_m$, so the problem of constructing type 2 dihedral sum sets in $D_{2m}$ is equivalent to the problem of constructing partial sum sets in $D_m$ with liftable parameters.
We now show how one may construct these partial sum sets.

The procedure begins with a maximally skew set in a group $G$ of odd order, which we ``twist'' into a partial sum set in the generalized dihedral group of $G$.
That our starting set is skew guarantees (see Lemma \ref{specsubsetslemma}) the partial sum set has parameters amenable to the lifting process described by Theorem \ref{2lift}.
When the group in which we begin is cyclic of odd order $m$, then the twist yields a partial sum set in $D_m$ which is then lifted to a sum set in $D_{2m}$.

\begin{theorem}\label{t2construction}
Let $G$ be a group of odd order $m$ with $M$ a maximal skew set in $G$.
Then $S = M \cup Mt$ is a $(2m, m-1, \frac{m-3}{2}, \frac{m-1}{2})$ partial sum set in $DihG = G \rtimes \{1,t\}$.
These partial sum sets can be lifted to $(4m, 2m-1, m-1)$ sum sets in $DihG \times C_2$.
\end{theorem}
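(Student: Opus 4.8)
The plan is to establish the two claims separately: first that $S = M\cup Mt$ realizes the stated partial-sum-set parameters in $DihG$, and then that $S$ meets the hypotheses of Theorem \ref{2lift}, from which the lifting assertion is immediate.

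For the first claim I would work directly with the multiplication rule $tg = g^{-1}t$ of $DihG$ and sort the products $s_1 s_2$ with $s_1,s_2\in S$ according to which of $M$ or $Mt$ each factor lies in. Products of two elements of $M$ and products of two elements of $Mt$ land in $G$ --- indeed $(m_1 t)(m_2 t)=m_1 m_2^{-1}$ --- while the two mixed products $m_1(m_2 t)=m_1 m_2 t$ and $(m_1 t)m_2=m_1 m_2^{-1} t$ land in $Gt$. Recasting the number of representations in terms of the special subsets of $M$ inside $G$, one gets $|\C_{a,S}|=|\C_{a,M}|+|\A_{a,M}|$ for $a\in G$ and $|\C_{gt,S}|=|\C_{g,M}|+|\A_{g,M}|$ for $gt\in Gt$. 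I would then invoke Lemma \ref{specsubsetslemma}(iii): since $G$ has odd order $m$ and $M$ is maximal skew, $|\A_{b,M}|+|\C_{b,M}|$ equals $\frac{m-3}{2}$ for $b\in M$ and $\frac{m-1}{2}$ for $b\notin M$ (the case $b=1$ being covered, since $M$ skew forces $1\notin M$, with $|\C_{1,M}|=0$ and $|\A_{1,M}|=|M|=\frac{m-1}{2}$). Because $S\cap G=M$ and $S\cap Gt=Mt$, an element $a$ lies in $S$ exactly when its relevant ``$G$-component'' ($a$ itself, or $g$ when $a=gt$) lies in $M$; such $a$ receive $\frac{m-3}{2}$ representations and every other nonidentity element receives $\frac{m-1}{2}$. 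Combined with $|S|=2|M|=m-1$ and $|DihG|=2m$, this is precisely a $(2m,m-1,\frac{m-3}{2},\frac{m-1}{2})$ partial sum set.

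For the lift, observe that these parameters have the form $(v,k,\beta-1,\beta)$ with $v=2m$, $k=m-1$, $\beta=\frac{m-1}{2}$, and that $1\notin S$ (as $1\notin M$ and $1\notin Gt$). To apply Theorem \ref{2lift} with $K=DihG$ and the $C_2$ factor, say $\{1,z\}$, playing the role of $N$, it remains only to verify $|S\cap S^{(-1)}|=\beta$. Here I would use that each element of $Gt$ is an involution --- from $tg=g^{-1}t$ one computes $(mt)^{-1}=mt$ --- so $S^{(-1)}=M^{(-1)}\cup Mt$; intersecting with $S=M\cup Mt$ and using $M\cap M^{(-1)}=\varnothing$ leaves $S\cap S^{(-1)}=Mt$, of size exactly $|M|=\frac{m-1}{2}=\beta$. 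Theorem \ref{2lift} then gives directly that $(S\cup Sz)\cup\{1\}$ and $(S\cup Sz)\cup\{z\}$ are $(4m,2m-1,m-1)$ sum sets (type 2) in $DihG\times C_2$.

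The only genuinely delicate step is the bookkeeping in the second paragraph: matching each of the four product types to $\C$ or $\A$ of $M$, checking that these really count representations and not merely left factors --- legitimate even for nonabelian $G$ because for fixed $x$ each of $xy=a$ and $xy^{-1}=a$ pins down $y$ uniquely --- and then correctly reading off which elements of $DihG$ actually lie in $S$. Once that is done, the conclusion is a straightforward appeal to Lemma \ref{specsubsetslemma} followed by Theorem \ref{2lift}.
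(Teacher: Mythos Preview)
Your proposal is correct and follows essentially the same route as the paper's proof: split products in $S$ according to whether factors lie in $M$ or $Mt$, reduce $|\C_{a,S}|$ to $|\C_{g,M}|+|\A_{g,M}|$ for the relevant $g\in G$, invoke Lemma~\ref{specsubsetslemma}(iii), and then check $1\notin S$ and $|S\cap S^{(-1)}|=|Mt|=\frac{m-1}{2}$ to apply Theorem~\ref{2lift}. Your write-up is in fact slightly more explicit than the paper's in spelling out the mixed products and the $b=1$ case, but the argument is the same.
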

\begin{proof}
As $|M| = \frac{m-1}{2}$, $|S| = m-1$.
Suppose $g = ab$ is a representation for $g \in G$ as a product in $S$.
Then either $a,b \in M$ or $a,b \in Mt$.
By definition there are $|\C_{g,M}|$ ways to write $g = ab$ as a product in $M$.
If $a,b \in Mt$, then we have 
\[ g = (m_1t)(m_2t) = m_1(tm_2)t = m_1(m_2^{-1}t)t = m_1m_2^{-1}. \]
Hence there are $|\A_{g,M}|$ ways to write $g$ as a product in $Mt$.
Combined, there are thus $|\C_{g,M}| + |\A_{g,M}|$ ways to write $g$ as a product in $S$.

Similarly, if $y = gt$ for some $g \in G$, then the number of ways to write $y$ as a product in $S$ is $|\C_{g,M}| + |\A_{g,M}|$.
Note $y \in S$ if and only if $g \in M$.

By Lemma \ref{specsubsetslemma}, we have
\[ |\C_{y,S}| =  
\begin{cases}
\frac{m-3}{2} &\text{if $y \in S$,}\\
\frac{m-1}{2} &\text{if $y \notin S$.}
\end{cases}
\]
Thus, $S$ is a $(2m, m-1, \frac{m-3}{2}, \frac{m-1}{2})$ partial sum set in $DihG$, as claimed.
That $M$ is skew implies $1 \notin M$ and $|S \cap S^{(-1)}| = |Mt| = \frac{m-1}{2}$.
By Theorem \ref{2lift}, $S$ can be lifted into a $(4m, 2m-1, m-1)$ sum set in $DihG \times C_2$.
\end{proof}

The partial sum set $S = M \cup Mt \subset DihG$ can also be used to construct sum sets in the group 
\[ D^*_n := \langle x,t ~|~ x^n = t^4 = 1, x^t = x^{-1} \rangle. \]
Note that this group is defined by a presentation very similar to the standard presentation for $D_n$, except the element $t$ which acts by inversion on the cyclic group $\langle x \rangle$ has order 4 rather than order 2.
It is easily seen that when $n$ is odd, this group has center $Z(D^*_n) = \{1,t^2\}$, and $D^*_n / Z(D^*_n) \cong D_n$.

Suppose $S \subset D_n$ is a $(2n, n-1, \frac{n-3}{2}, \frac{n-1}{2})$ partial sum set as described in Theorem \ref{t2construction}. 
Using the natural correspondence between $D_n$ and $D^*_n / Z(D^*_n)$, define
\[ S^* = \bigcup_{x \in S} xZ(D^*_n). \]
That $S^*$ is a $(4n,2n-2,n-3,n-1)$ partial sum set follows from an argument almost identical to that used in Theorem \ref{2lift}.
Hence, adjoining either central element to $S^*$ creates a $(4n,2n-1,n-1)$ sum set in $D^*_n$.
We summarize the preceding discussion as a theorem:
\begin{theorem}\label{generalizedt2construction}
The group 
\[ D^*_n = \langle x,t | x^n=t^4=1, x^t=x^{-1} \rangle \]
admits $(4n,2n-1,n-1)$ sum sets whenever $n$ is odd.
These sum sets are all type 2 with respect to the center $\{1,t^2\}$ of $D^*_n$.
\end{theorem}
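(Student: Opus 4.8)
The plan is to realise these sum sets by lifting the dihedral partial sum sets of Theorem~\ref{t2construction} through the quotient map $D^*_n\to D^*_n/Z(D^*_n)$, in close analogy with Theorem~\ref{2lift}, and then completing by a central element. First I would record the structure of $D^*_n$ for $n$ odd. From the presentation, $t^2$ commutes with $x$ --- since $t^2x = t(tx) = t(x^{-1}t) = (tx^{-1})t = (xt)t = xt^2$ --- and $t^2$ has order $2$, so $\langle t^2\rangle$ is central of order $2$; a short case check over the cosets of $\langle x\rangle$, using that $x^{2j}=1$ forces $j=0$ because $n$ is odd, shows no other element is central, so $Z(D^*_n)=\{1,t^2\}$. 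Realising $D^*_n$ as $C_n\rtimes C_4$ with $C_4$ acting through inversion shows $|D^*_n|=4n$; the images of $x$ and $t$ in $D^*_n/Z(D^*_n)$ satisfy the defining relations of $D_n$ and the quotient has order $2n$, so $D^*_n/Z(D^*_n)\cong D_n$.

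Next I would take $S = M\cup Mt$, the $(2n,n-1,\frac{n-3}{2},\frac{n-1}{2})$ partial sum set in $D_n$ given by Theorem~\ref{t2construction} with $G=C_n$; recall $1\notin S$ and $|S\cap S^{(-1)}|=\frac{n-1}{2}$. Let $\pi\colon D^*_n\to D^*_n/Z(D^*_n)\cong D_n$ be the projection and set $S^*=\pi^{-1}(S)$, a union of cosets of $Z(D^*_n)$ with $|S^*|=2n-2$ and $S^*\cap Z(D^*_n)=\varnothing$. The crux is the identity $|\C_{a,S^*}|=2\,|\C_{\pi(a),S}|$ for every $a\in D^*_n$: a factorisation $a=xy$ in $S^*$ projects to $\pi(a)=\pi(x)\pi(y)$ in $S$, and conversely each factorisation $\bar a=\bar x\bar y$ in $S$ lifts in exactly two ways, because there are two preimages of $\bar x$ in $S^*$ and then $y=x^{-1}a$ is forced and automatically lies in the $Z(D^*_n)$-saturated set $S^*$. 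For $a\notin Z(D^*_n)$ this makes $|\C_{a,S^*}|$ equal to $n-3$ or $n-1$ according as $a\in S^*$ or not; for $a=t^2$ it gives $|\C_{t^2,S^*}|=2\,|\C_{1,S}|=2|S\cap S^{(-1)}|=n-1$; and since inversion commutes with $\pi$ we get $|S^*\cap(S^*)^{(-1)}|=2|S\cap S^{(-1)}|=n-1$. Hence $S^*$ is a $(4n,2n-2,n-3,n-1)$ partial sum set.

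Finally I would adjoin a central element. As $1,t^2\notin S^*$ and $t^2\in Z(D^*_n)$, adjoining either $1$ or $t^2$ to $S^*$ creates exactly two new factorisations of each nonidentity element of $S^*$ and none of any other nonidentity element, raising the corresponding counts from $n-3$ to $n-1$ while leaving every other nonidentity count at $n-1$. Thus $S^*\cup\{1\}$ and $S^*\cup\{t^2\}$ are $(4n,2n-1,n-1)$ sum sets, and since each meets $Z(D^*_n)$ in one element and every other coset of $Z(D^*_n)$ in $0$ or $2$ elements, both are type~2 with respect to $Z(D^*_n)$ in the sense of Definition~\ref{typedef}. The main obstacle is that $D^*_n$ is a non-split extension of $D_n$ by $C_2$, not a direct product, so Theorem~\ref{2lift} cannot be quoted verbatim; the work is in confirming that its counting argument uses only centrality of the order-$2$ subgroup, the condition $1\notin S$, and the $Z(D^*_n)$-saturation of $S^*$, all of which hold here.
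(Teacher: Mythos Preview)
Your proposal is correct and follows essentially the same route as the paper: lift the partial sum set $S\subset D_n$ from Theorem~\ref{t2construction} to its full preimage $S^*\subset D^*_n$ under the quotient by $Z(D^*_n)=\{1,t^2\}$, verify via the doubling identity $|\C_{a,S^*}|=2|\C_{\pi(a),S}|$ that $S^*$ is a $(4n,2n-2,n-3,n-1)$ partial sum set, and then adjoin a central element. The paper is terser---it simply asserts that the argument of Theorem~\ref{2lift} carries over---whereas you spell out the structural facts about $D^*_n$ and correctly flag that Theorem~\ref{2lift} cannot be invoked verbatim because $D^*_n$ is not $D_n\times C_2$; your observation that only centrality of the order-$2$ subgroup and $Z$-saturation of $S^*$ are needed is exactly the point.
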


Sum sets in the groups $D_n$ and $D^*_n$ appear as Examples 6.3 and 6.4 in \cite{sumnerbutson}.

\section{Frobenius Constructions}\label{construction3}

We have seen how normal subgroups of a group $G$ affect the possible size and shape of sum sets in $G$.
In this section, we consider a family of groups that admits both sum sets and partial sum sets.
Interestingly, these sum sets and partial sum sets are constructed with respect to subgroups which are not normal.

The groups in question are all Frobenius groups, so before proceeding with the constructions, we collect some relevant information about Frobenius groups.
For a more detailed treatment see Gorenstein \cite{bgorenstein80}.
Let $H$ be a subgroup of the group $G$.
For any $g \in G$, we write $H^g$ to represent the conjugate of $H$ by $g$, i.e. $H^g = g^{-1}Hg$.
If $N_G(H) = H$ and $H^{g_1} \cap H^{g_2} = \{1\}$ whenever $H^{g_1}$ and $H^{g_2}$ are distinct conjugates of $H$ in $G$, then $G$ is called a {\it Frobenius group} and the subgroup $H$ is called a {\it Frobenius complement}.
Note any conjugate of $H$ also functions as a Frobenius complement in $G$.

If $G$ is a Frobenius group, then $G$ possesses a proper, nontrivial normal subgroup $K$, called the {\it Frobenius kernel} of G, such that $G = K \rtimes H$, where $H$ is a Frobenius complement.
It can be shown $C_G(k) \le K$ for all nonidentity $k \in K$.
Consequently, for any nonidentity $k \in K$ and $h_1, h_2 \in H$, we have $k^{h_1} = k^{h_2}$ if and only if $h_1 = h_2$.
It follows $o(H) \le o(K) - 1$ for any Frobenius group $K \rtimes H$, with equality if and only if $H$ acts regularly (i.e. sharply transitively) on the nonidentity elements of $K$.
This regular action is the key to our construction of additively regular sets in Frobenius groups.
\begin{theorem}\label{frobtheorem}
Let $G = K \rtimes H$ be a Frobenius group where $H$ acts regularly on the nonidentity elements of $K$.
Then the union of any $t$ nontrivial left cosets of $H$ is a $(o(G),t\, o(H),t^2-t,t^2)$ partial sum set.
Any $t$ nontrivial left cosets of $H$ together with $H$ is a $(o(G),(t+1)o(H),t^2+o(H),t^2+t)$ partial sum set.
\end{theorem}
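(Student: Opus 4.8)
The plan is to work inside the semidirect-product decomposition $G = K \rtimes H$ and to make repeated use of the fact, recalled above, that $H$ acts sharply transitively on $K \setminus \{1\}$. Two structural observations set everything up. First, since $G = KH$ with $K \cap H = \{1\}$, each $g \in G$ has a unique expression $g = kh$ with $k \in K$ and $h \in H$; hence the left cosets of $H$ are precisely the sets $kH$ with $k \in K$, these are pairwise distinct, and the unique trivial coset is $1 \cdot H = H$. Second, the multiplication rule in the semidirect product gives
\[ (k_i h)(k_j h') = (k_i \cdot h k_j h^{-1})(h h'), \]
so a product $xy$ with $x \in k_iH$ and $y \in k_jH$ has $K$-part $k_i \cdot h k_j h^{-1}$ and $H$-part $hh'$.

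For the first statement, write $S = k_1 H \cup \dots \cup k_t H$ with $k_1, \dots, k_t \in K \setminus \{1\}$ distinct, so $|S| = t\,o(H)$, and fix a nonidentity $a \in G$ with unique decomposition $a = k h_0$. I would count the representations $a = xy$ with $x, y \in S$ by summing, over each ordered pair of indices $(i,j) \in \{1,\dots,t\}^2$, the number of pairs $(x,y) \in k_iH \times k_jH$ with $xy = a$; note that $x$ and $y$ automatically lie in $S$ because $S$ is a union of full cosets of $H$. By the product rule such a pair is determined by a choice of $h \in H$ with $h k_j h^{-1} = k_i^{-1} k$ (the element $h'$ being forced by $hh' = h_0$). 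This is exactly where sharp transitivity enters: since $k_j \neq 1$, there is precisely one such $h$ when $k_i^{-1}k \neq 1$ and none when $k_i^{-1}k = 1$. As $k_i^{-1}k = 1 \iff k = k_i \iff a \in k_iH$, summing over the $t^2$ pairs gives $t^2 - t$ representations if $a \in S$ (equivalently, if the $K$-part of $a$ is one of the $k_i$) and $t^2$ representations otherwise. Since these counts depend only on membership of $a$ in $S$, $S$ is a $(o(G), t\,o(H), t^2-t, t^2)$ partial sum set, and $o(G) = o(K)\,o(H) = (o(H)+1)\,o(H)$.

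The second statement is proved the same way after enlarging the index set: adjoining $H$ to $S$ amounts to allowing an extra index $0$ with $k_0 = 1$, so the sum now runs over $(i,j) \in \{0,1,\dots,t\}^2$. The new feature --- and the step I expect to be the main obstacle --- is the \emph{degenerate} contribution of the trivial coset: when $k_j = 1$ (i.e.\ $y$ ranges over $H$) the equation $h k_j h^{-1} = k_i^{-1}k$ collapses to $1 = k_i^{-1}k$, which is satisfied by \emph{all} $o(H)$ choices of $h$ if $k = k_i$ and by none otherwise, rather than by a single $h$; the cases $k_i = 1$ and $i = j = 0$ must likewise be isolated. Organizing the sum into the four blocks according to whether $i = 0$ and whether $j = 0$, each block yields a clean term, and adding them shows that $a$ is represented $t^2 + o(H)$ times when $a \in S \cup H$ and $t^2 + t$ times when $a \notin S \cup H$; the extra $o(H)$ and the asymmetry between the two parameters are precisely the residue of this degenerate block. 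With $|S \cup H| = (t+1)\,o(H)$ this identifies $S \cup H$ as a $(o(G), (t+1)o(H), t^2 + o(H), t^2 + t)$ partial sum set. The only care required throughout is routine bookkeeping --- that $x, y$ lie automatically in the set, that $a = 1$ is excluded by the definition of a partial sum set, and that membership of $a$ in the set is governed solely by its $K$-part.
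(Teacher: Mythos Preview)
Your proposal is correct and follows essentially the same route as the paper's proof: both arguments use the unique $K$-$H$ decomposition of group elements, reduce the counting of representations $a = xy$ to counting solutions $h \in H$ of $h k_j h^{-1} = k_i^{-1}k$, and invoke sharp transitivity to get exactly one solution when both sides are nonidentity and none otherwise. For the second statement your four-block organization (according to whether $i=0$ and whether $j=0$) is a slightly cleaner packaging of what the paper does by hand --- the paper keeps the count from the first part and then separately tallies products of the shapes $h_i(k_j h_j)$, $(k_i h_i)h_j$, and $h_i h_j$ --- but the substance of the case analysis, including the ``degenerate'' contribution of $o(H)$ when one factor lies in $H$, is identical.
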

\begin{proof}
Whenever an arbitrary element $kh \in G$ is expressed as a product in $G$, we have
\begin{align*}
kh &= (k_1h_1)(k_2h_2) \\ &= k_1(h_1k_2h_1^{-1})h_1h_2 \\ &= (k_1k_2^{h_1^{-1}})(h_1h_2).
\end{align*}
Hence we require (1) $k = k_1k_2^{h_1^{-1}}$, and (2) $h = h_1h_2$.

Now $K \cap H = \{1\}$, so for $k_1,k_2 \in K$ we have $k_1H = k_2H$ if and only if $k_1 = k_2$.
So the set of left cosets of $H$ in $G$ is the set $\{kH : k \in K\}$.
Choose $t$ nonidentity elements $k_1, \ldots, k_t \in K$, and set $S = k_1H \cup \cdots \cup k_tH$.
We count the number of ways to write an arbitrary element of $G$ as a product in $S$.

Let $kh \in G$.
For each $k_i$ such that $k_iH \subset S$, consider the element $k_i^{-1}k$.
Provided $k_i \ne k$, $k_i^{-1}k \ne 1$.
As $H$ acts regularly on the nonidentity elements of $K$, for any $k_j \in \{k_1,\ldots,k_t\}$ there is a unique $h_1 \in H$ such that 
\[ k_j^{h_1^{-1}} = k_i^{-1}k. \]
With $h_1$ now fixed, there is a unique $h_2 \in H$ such that $h = h_1h_2$.
We have $k = k_ik_j^{h_1^{-1}}$ and $h = h_1h_2$, so $kh = (k_ih_1)(k_jh_2)$.

If $kh \in S$, then $k$ is one of the elements $k_1,\ldots,k_t$, so there are $t-1$ choices for $k_i$ in the preceding argument, and then $t$ choices for $k_j$.
Hence $kh$ can be written as a product in $S$ in $(t-1)t = t^2-t$ ways.
If $kh \notin S$, then there are $t$ choices for $k_i$, so $kh$ can be written as a product in $S$ in $t^2$ ways.
Thus, $S$ is a $(o(G),t\, o(H),t^2-t,t^2)$ partial sum set.

Next we consider the set $S \cup H$.
We have already counted the number of ways to write any element in $G$ as a product in $S$, so now we must only consider writing an arbitrary element $kh$ as a product $kh = h_i(k_jh_j)$ and as a product $kh = (k_ih_i)h_j$.

First suppose $kh \in S \cup H$.
If $k = 1$, i.e. $kh = h \in H$, then from the previous argument there are $t^2$ ways to write $h$ as a product in $S$.
There are an additional $o(H)$ ways to write $h$ as a product in $H$, and there is no way to write $h$ as a product where one factor lives in $H$ and the other does not.
Hence, there are $t^2 + o(H)$ ways to write any $h \in H$ as a product in $S \cup H$.

If $k \ne 1$, then $kh = h_i(k_jh_j)$ if and only if $k = k_j^{h_i^{-1}}$ and $h = h_ih_j$.
As before, the regular action of $H$ on the nonidentity elements of $K$ allows us to choose $k_j$ freely from $k_1,\ldots,k_t$, whereupon $h_i$ and hence $h_j$ are uniquely determined.
This contributes $t$ additional ways to write $kh \in S$ as a product in $S \cup H$.
Finally, to write $kh$ as a product $(k_ih_i)h_j$ in $S \cup H$, then $k$ must equal $k_i$, but $h_i$ may be chosen freely and uniquely determines $h_j$.
Hence $kh \in S$ can be written as a product in $S \cup H$ in $(t^2 - t) + t + o(H) = t^2 + o(H)$ ways, the same as the number of ways to write $h \in H$ as a product in $S \cup H$.

Now suppose $kh \notin S \cup H$.
There are $t^2$ ways to write $kh$ as a product in $S$.
As previously argued, the regular action of $H$ on nonidentity elements of $K$ provides $t$ ways to write $kh = h_i(k_jh_j)$ as a product in $S \cup H$.
To write $kh$ as a product $(k_ih_i)h_j$ we require $k = k_i$.
But we are assuming $kh \notin S$, so $k \notin \{k_1,\ldots,k_t\}$.
Hence it is impossible to write $kh$ as a product $(k_ih_i)h_j$.
Thus, the total number of ways to write $kh \notin S \cup H$ as a product in $S \cup H$ is $t^2 + t$.
This completes the proof.
\end{proof}

To make use of Theorem \ref{frobtheorem}, we first must possess a Frobenius group $K \rtimes H$ where $H$ acts regularly on the nonidentity elements of $K$, or equivalently, where $o(H) = o(K)-1$.
A sufficient condition for the existence of such a group is that $o(K)$ is a prime power.

For any prime power $q$, the set of invertible affine transformations of the form $x \mapsto ax+b$ of the field $GF(q)$ forms a group $\text{Aff}(q)$ of order $q(q-1)$.
The set of maps for which $a=1$ forms a normal subgroup $K$ isomorphic to the additive group $EA(q)$ of the field, while those maps for which $b=0$ form a subgroup $H$ isomorphic to the field's multiplicative group $C_{q-1}$.
It is easily checked that $\text{Aff}(q)$ is Frobenius with kernel $K$ and complement $H$, and that $H$ acts regularly on $K$.
Hence we may apply Theorem \ref{frobtheorem} to the group $\text{Aff}(q)$.
In particular, by fixing certain values of $t$, we can use the partial sum sets of Theorem \ref{frobtheorem} to construct sum sets in certain subgroups of $\text{Aff}(q)$ and in extensions of $\text{Aff}(q)$.

First, we consider a partial sum set $P \subset \text{Aff}(q)$ consisting of a single nontrivial left coset of $H$.
By Theorem \ref{frobtheorem}, $P$ is a $(q(q-1), q-1, 0, 1)$ partial sum set.
Note that every element not in $P$ is generated precisely once as a product in $P$.
In particular, simple counting shows that 1 must be generated precisely once as a product in $P$, so $|P \cap P^{(-1)}| = 1$.
Hence $P$ is amenable to the lifting procedure described in Theorem \ref{2lift}.
That is, if $\{1,z\}$ is the cyclic group of order 2, then $P \cup Pz$ is a $(2q(q-1),2(q-1),0,2)$ partial sum set in $\text{Aff}(q) \times \{1,z\}$.
Adjoining either 1 or $z$ to $P \cup Pz$ yields a $(2q(q-1),2q-1,2)$ sum set.
We state this conclusion as a corollary to Theorems \ref{2lift} and \ref{frobtheorem}.
\begin{corollary}\label{onecoset}
For any prime power $q$, there exist sum sets with parameters $(2q(q-1),2q-1,2)$ in the group $\text{Aff}(q) \times C_2$, where $C_2$ is the cyclic group of order 2.
\end{corollary}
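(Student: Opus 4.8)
The plan is to specialise Theorem~\ref{frobtheorem} to the affine group $\text{Aff}(q)$ with parameter $t = 1$, and then apply the lifting construction of Theorem~\ref{2lift} to the resulting partial sum set.

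First I would recall that $\text{Aff}(q)$, the group of maps $x \mapsto ax + b$ on $GF(q)$, is a Frobenius group of order $q(q-1)$ whose kernel $K$ (the translations, $a = 1$) is isomorphic to $EA(q)$ and whose complement $H$ (the maps with $b = 0$) is isomorphic to $C_{q-1}$; since $o(H) = q - 1 = o(K) - 1$, the complement $H$ acts regularly on the nonidentity elements of $K$, so Theorem~\ref{frobtheorem} applies. Taking $t = 1$, any single nontrivial left coset $P$ of $H$ is a $(q(q-1),\,q-1,\,0,\,1)$ partial sum set, and $P$ does not contain the identity (being a nontrivial coset).

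Next I would check that $P$ satisfies the hypothesis of Theorem~\ref{2lift}, i.e.\ $|P \cap P^{(-1)}| = \beta = 1$. This is the only step that is not a pure substitution: counting the $|P|^2 = (q-1)^2$ ordered products of elements of $P$, the identity is produced $|P \cap P^{(-1)}|$ times, each of the $q-1$ nonidentity elements of $P$ is produced $\lambda = 0$ times, and each of the remaining $q(q-1) - 1 - (q-1) = q^2 - 2q$ nonidentity elements of $G$ is produced $\mu = 1$ time; hence $(q-1)^2 = (q^2 - 2q) + |P \cap P^{(-1)}|$, which forces $|P \cap P^{(-1)}| = 1$. With this in hand, Theorem~\ref{2lift} (applied with $v = q(q-1)$, $k = q-1$, $\beta = 1$) gives that $S = P \cup Pz$ is a $(2q(q-1),\,2(q-1),\,0,\,2)$ partial sum set in $\text{Aff}(q) \times \{1,z\}$, and that $S \cup \{1\}$ and $S \cup \{z\}$ are both $(2q(q-1),\,2q-1,\,2)$ sum sets, type 2 with respect to $\{1,z\}$. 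This is exactly the assertion, and no real obstacle is expected beyond the elementary count just described.
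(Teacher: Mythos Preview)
Your proposal is correct and follows essentially the same route as the paper: specialise Theorem~\ref{frobtheorem} to $\text{Aff}(q)$ with $t=1$, verify $|P\cap P^{(-1)}|=1$ by the obvious count of the $(q-1)^2$ ordered products, and then invoke Theorem~\ref{2lift}. The only difference is that you spell out the count explicitly where the paper simply says ``simple counting shows that $1$ must be generated precisely once as a product in $P$.''
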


If $K \rtimes H$ is a Frobenius group and $J$ is a nontrivial subgroup of $H$, then $K \rtimes J$ is a Frobenius group with kernel $K$ and complement $J$ (if $J$ is the identity subgroup then $K \rtimes J \cong K$ is not Frobenius).
The Frobenius group $\text{Aff}(q)$ has complement $H \cong GF(q)^*$, so the subgroups of $H$ correspond to divisors of $q-1$.
If $d \mid (q-1)$, then the subgroup of $H$ of order $d$ is the cyclic group $C_d$ of order $d$, so we write $EA(q) \rtimes C_d$ to denote the corresponding Frobenius subgroup of $\text{Aff}(q)$. 
We now demonstrate the existence of sum sets in all Frobenius subgroups of $\text{Aff}(q)$.
\begin{theorem}\label{frobsubgroups}
For any divisor $d \ge 2$ of $q-1$, the Frobenius group $EA(q) \rtimes C_d$ admits sum sets with parameters $(qd, 2q-1, \frac{4(q-1)}{d})$.
\end{theorem}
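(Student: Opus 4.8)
The plan is to build an explicit sum set inside a concrete model of $G=EA(q)\rtimes C_d$; note that Theorem~\ref{frobtheorem} does not apply directly, since $C_d$ fails to act regularly on $EA(q)\setminus\{1\}$ once $d<q-1$, so a sharper count is needed. Identify $EA(q)$ with the additive group of $GF(q)$ and $C_d$ with the unique subgroup of order $d$ in $GF(q)^*$, so the elements of $G$ are the pairs $(b,a)$ with $b\in GF(q)$, $a\in C_d$, multiplied by $(b_1,a_1)(b_2,a_2)=(b_1+a_1b_2,\,a_1a_2)$ (the affine maps $x\mapsto ax+b$ with $a\in C_d$; equivalently $G$ is the normal subgroup of $\text{Aff}(q)$ lying over $C_d\le C_{q-1}$). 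For $\beta\in GF(q)$ write $Q_\beta=\{(\beta,a):a\in C_d\}$ for the left coset of $C_d$ indexed by $\beta$, and let $R=(q-1)/d$ be the number of cosets of $C_d$ in the multiplicative group $GF(q)^*$. Since $d\ge2$ we may fix a set $B\subseteq GF(q)^*$ meeting each of these $R$ multiplicative cosets in exactly two points, so $|B|=2R$ and $0\notin B$ (when $\gcd(d,R)=1$ one may simply take $B$ to be the union of two cosets of the group of $d$-th powers). I would then set $S=\bigl(\bigcup_{\beta\in B}Q_\beta\bigr)\cup\{1\}$; since the $Q_\beta$ are pairwise disjoint of size $d$, this has $|S|=2Rd+1=2q-1$.

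To check additive regularity, fix a nonidentity $(c,e)\in G$ (so $e\in C_d$) and count factorisations $(c,e)=(b_1,a_1)(b_2,a_2)$ with both factors in $S$, splitting according to how many factors equal $1$. Those with neither factor equal to $1$ force $b_1,b_2\in B$, $a_1a_2=e$ and $b_1+a_1b_2=c$; since $a_2=a_1^{-1}e$ is determined as $a_1$ ranges over $C_d$, this case contributes $N_0(c):=\sum_{a\in C_d}\#\{(b_1,b_2)\in B\times B:b_1+ab_2=c\}$, which depends only on $c$. Factorisations using exactly one copy of $1$ contribute $2$ if $(c,e)\in\bigcup_{\beta}Q_\beta$, i.e.\ if $c\in B$, and $0$ otherwise, while $1\cdot1=1$ is impossible. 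So $(c,e)$ has $N_0(c)+2$ representations when $c\in B$ and $N_0(c)$ when $c\notin B$, and the construction succeeds precisely when $N_0(c)=\tfrac{4(q-1)}{d}$ for $c\notin B$ and $N_0(c)=\tfrac{4(q-1)}{d}-2$ for $c\in B$.

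This identity is where the two-point property of $B$ does its work. Solving $ab_2=c-b_1$ for $b_2$ rewrites $N_0(c)=\sum_{b\in B}\nu(c-b)$, where $\nu(x):=\#\{a\in C_d:a^{-1}x\in B\}$. Plainly $\nu(0)=0$, and for $x\ne0$ the elements $a^{-1}x$, $a\in C_d$, run over the multiplicative coset $xC_d$, so $\nu(x)=|B\cap xC_d|=2$. Since $c-b=0$ for at most one $b\in B$ — for exactly one when $c\in B$ — we get $N_0(c)=2(|B|-1)=\tfrac{4(q-1)}{d}-2$ for $c\in B$ and $N_0(c)=2|B|=\tfrac{4(q-1)}{d}$ for $c\notin B$, as wanted. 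Hence every nonidentity element of $G$ is a product in $S$ in exactly $\mu=\tfrac{4(q-1)}{d}$ ways, so $S$ is a $(qd,2q-1,\tfrac{4(q-1)}{d})$ sum set; the parameter equation is then automatic (one finds $n=k^2-\mu v=1$ and $|S\cap S^{(-1)}|=\mu+1$, the latter also following directly from the two-point property), and $S$ is nontrivial once $d\ge3$.

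The heart of the argument is recognising the right shape for $S$ — a union of left cosets of $C_d$ in $G$ together with the identity — and the observation that what forces the twisted sum $N_0(c)$ to collapse to an almost-constant is exactly the requirement that $B$ meet every multiplicative coset of $C_d$ in the same number of points, namely two; after that the bookkeeping is routine, the only delicate case being $c=0$ (the elements $(0,e)$, $e\ne1$), which the formula covers uniformly because $0\notin B$.
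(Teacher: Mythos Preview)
Your proof is correct and follows essentially the same approach as the paper's: both construct the sum set as the identity together with a union of $2(q-1)/d$ nontrivial left cosets of $C_d$ in $G$, chosen so that the indexing translations meet each $C_d$-orbit (equivalently, each multiplicative coset of $C_d$ in $GF(q)^*$) in exactly two points, and both verify regularity by the same two-choices-per-orbit count. The only difference is packaging---the paper phrases the construction via the embedding $G\le\text{Aff}(q)$ and the orbit language of the $C_d$-action on $K\setminus\{1\}$, while you work directly in affine coordinates and isolate the key count as the function $\nu$---but the underlying argument is identical.
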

\begin{proof}
The proof is similar to the proof of Theorem \ref{frobtheorem}, so we adopt consistent notation.
Let $K \cong EA(q)$ denote the Frobenius kernel of $\text{Aff}(q)$, and let $H$ denote the Frobenius complement.
Let $d \ge 2$ be a divisor of $q-1$, and set $G = K \rtimes C_d$.

We begin by constructing a partial sum set in $\text{Aff}(q)$. 
The induced action of $C_d < H$ on the nonidentity elements of $K$ has $(q-1)/d$ orbits each of length $d$.
Choose two elements from each orbit, say $k_1, k_2, \ldots, k_t$, where $t = 2(q-1)/d$.
Set $S = k_1H \cup \cdots k_tH$.
By Theorem \ref{frobtheorem}, $S$ is a partial sum set in $\text{Aff}(q)$ with parameters 
\[ ( q(q-1), t(q-1), t(q-1)[t(q-1)-1], t^2(q-1)^2 )  \]
We claim $S \cap G$ is a partial sum set in $G$.

Let $kh \in G$.
For each $k_i$ such that $k_iH \subset S$, consider the element $k_i^{-1}k$.
Provided $k_i$ does not equal $k$, $k_i^{-1}k \ne 1$, so $k_i^{-1}k$ lives in one of the $(q-1)/d$ orbits of $K$ induced by $C_d$.
Let $k_j$ be an element of this orbit satisfying $k_jH \subset S$.
There is a unique $h_1 \in C_d$ such that 
\[ k_j^{h_1^{-1}} = k_i^{-1}k. \]
With $h_1$ now fixed there is a unique $h_2 \in C_d$ such that $h = h_1h_2$.
We have $k = k_ik_j^{h_1^{-1}}$ and $h = h_1h_2$, so $kh = (k_ih_1)(k_jh_2)$.

If $kh \in S \cap G$, then $k$ is one of the elements $k_1, \ldots , k_t$, so there are $t - 1$ choices for $k_i$ in the preceding argument.
Since we chose precisely two elements from each orbit of the action of $C_d$ on the nonidentity elements of $K$, there are two choices for $k_j$.
Thus $kh$ can be written as a product in $S \cap G$ in precisely $2(t-1)$ ways.
If $kh \notin S \cap G$, then there are $t$ choices for $k_i$, so $kh$ can be written as a product in $S \cap G$ in $2t$ ways.
Hence $S \cap G$ is a $(qd, 2(q-1), 2(t-1), 2t)$ partial sum set in $G$.
Adjoining 1 to $S \cap G$ yields a $(qd, 2q-1, \frac{4(q-1)}{d} )$ sum set.
\end{proof}

A few remarks about Theorem \ref{frobsubgroups} are helpful.
The key step in the proof is choosing precisely two elements from each orbit of the induced action of $C_d$ on $K$.
More generally, choosing some constant number of elements, say $c$, from each orbit will induce a partial sum set in the subgroup $G$, but unless $c=2$ this partial sum set will not be completable to a sum set.
If we do not choose some constant number of elements from each orbit, then $S \cap G$ will not be a partial sum set.

When $q$ is odd, $d=2$ is always a divisor of $q-1$, but the resulting sum set in $EA(q) \rtimes C_2$ is a trivial $(2q,2q-1,2q-2)$ sum set.
When $d=q-1$ the resulting sum set is in $\text{Aff}(q)$ itself, so we have the following corollary.
\begin{corollary}\label{twocosets}
In the group $\text{Aff}(q) = EA(q) \rtimes C_{q-1}$, any two nonidentity cosets of $C_{q-1}$ together with 1 form a $(q(q-1), 2q-1, 4)$ sum set.
\end{corollary}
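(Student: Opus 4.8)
The plan is to obtain this corollary as the $d = q-1$ specialisation of Theorem \ref{frobsubgroups}, or, more directly, as a single application of Theorem \ref{frobtheorem} with $t = 2$ followed by the ``adjoin the identity'' trick. First I would record that $\text{Aff}(q)$ is a Frobenius group with kernel $K \cong EA(q)$ and complement $H \cong C_{q-1}$, as noted in the discussion preceding Corollary \ref{onecoset}, and that since $o(H) = q-1 = o(K) - 1$, the complement $H$ acts regularly on the nonidentity elements of $K$. Thus the hypotheses of Theorem \ref{frobtheorem} hold with $G = \text{Aff}(q)$, so $o(G) = q(q-1)$ and $o(H) = q-1$.

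Next I would apply Theorem \ref{frobtheorem} with $t = 2$: the union $P$ of any two nontrivial left cosets of $H$ is a partial sum set with parameters $(o(G), t\,o(H), t^2-t, t^2) = (q(q-1), 2(q-1), 2, 4)$. Since each coset making up $P$ is nontrivial and $1 \in H$, we have $1 \notin P$. Finally I would adjoin the identity: because $1 \notin P$, the representations $1 \cdot a = a \cdot 1 = a$ show that passing from $P$ to $P \cup \{1\}$ increases by exactly $2$ the number of ways each element of $P$ is written as a product in the set, while leaving unchanged the number of ways of writing any element outside $P \cup \{1\}$. Hence every nonidentity element of $\text{Aff}(q)$ is written in exactly $4$ ways, and $|P \cup \{1\}| = 2(q-1) + 1 = 2q-1$, so $P \cup \{1\}$ is a $(q(q-1), 2q-1, 4)$ sum set, as claimed.

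I do not anticipate any genuine obstacle here, as the entire content is already packaged in Theorem \ref{frobtheorem} together with the standard fact that $\text{Aff}(q)$ is sharply $2$-transitive and hence Frobenius with a regularly acting complement. The only point needing a word of care is that adjoining the single element $1$ does not perturb the product counts of elements lying outside $P$, which is immediate since $1$ lies in no nontrivial coset of $H$.
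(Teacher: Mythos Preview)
Your proposal is correct and matches the paper's approach: the paper states this corollary as the $d=q-1$ case of Theorem \ref{frobsubgroups}, and your direct argument via Theorem \ref{frobtheorem} with $t=2$ followed by adjoining the identity is exactly what the proof of Theorem \ref{frobsubgroups} unwinds to in that case.
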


\bibliographystyle{amsplain}
\providecommand{\bysame}{\leavevmode\hbox to3em{\hrulefill}\thinspace}

\end{document}